\documentclass[11pt,leqno]{amsart}
\title[Involutive braidings: the quantum algebra $\mathfrak{gl}_{k,m}$]
{Non-combinatorial involutive braidings: the quantum algebra $\mathfrak{gl}_{k,m}$}
\author[Anastasia Doikou]{Anastasia Doikou}

\address[Anastasia Doikou] {Department of Mathematics, Heriot-Watt University,
Edinburgh EH14 4AS $\&$ Maxwell Institute for Mathematical Sciences, Edinburgh EH8 9BT, UK}
\email{a.doikou@hw.ac.uk}

\usepackage[utf8]{inputenc}
\usepackage{mathtools}
\usepackage{ytableau}
\usepackage{amsmath,amssymb}
\usepackage{tikz}
\usetikzlibrary{decorations.pathreplacing}
\usepackage{wrapfig}
\usepackage{lscape}
\usepackage{rotating}
\usepackage{epstopdf}
\usepackage[colorlinks,linkcolor=blue]{hyperref}
\usepackage{lipsum}
\usepackage{emptypage}

  \usepackage{latexsym} 
  \usepackage[all]{xy}
  \usepackage{amsfonts} 
  \usepackage{amsthm} 
  \usepackage{amsmath} 
  \usepackage{amssymb}
  \usepackage{pifont}  
  \usepackage{enumerate}
  \usepackage{dcolumn}
  \usepackage{comment}
  \usepackage{hyperref}  
\usepackage[english]{babel}
  \usepackage{amsfonts} 
\usepackage[utf8]{inputenc}
\usepackage{graphicx}
\usepackage{subcaption}
\usepackage[colorinlistoftodos]{todonotes}
\usepackage{indentfirst}
\usepackage{pifont}  
  \usepackage{enumerate}
  \usepackage{dcolumn}
  \usepackage{comment}
\usepackage[capitalise]{cleveref}
  \usepackage[capitalise]{cleveref}
\usepackage{multirow}
\usepackage{tikz}
\usetikzlibrary{automata, positioning, arrows}

\usetikzlibrary{trees,arrows}

  \usepackage{latexsym} 
  \usepackage[all]{xy}
  \usepackage{amsfonts} 
  \usepackage{amsthm} 
  \usepackage{amsmath} 
  \usepackage{amssymb}
  \usepackage{pifont}  
  \usepackage{enumerate}
  \usepackage{dcolumn}
  \usepackage{comment}
  \usepackage{hyperref}  
  \usepackage{tikz-cd}
\usepackage[english]{babel}
  \usepackage{amsfonts} 
\usepackage[utf8]{inputenc}

 \usepackage{tikz}
\usetikzlibrary{arrows}

 \newcolumntype{2}{D{.}{}{2.0}}
  \xyoption{2cell}

\newcommand{\hiddenpower}[2] { \ifnum \numexpr#2=1 #1 \else #1^#2 \fi }
\numberwithin{equation}{section}

\def\be{\begin{equation}}
\def\ee{\end{equation}}
\def\ba{\begin{eqnarray}}
\def\ea{\end{eqnarray}}

\newcommand{\cal}{\mathcal}

\usepackage{xparse}
\ExplSyntaxOn
\newcounter{diff_order}
\newcounter{diff_power}

\newcommand{\rawdiff}[3]
{
	\setcounter{diff_order}{0}
	\clist_map_inline:nn{#3}{\stepcounter{diff_order}}
	
	\frac{\hiddenpower{#1}{\thediff_order} #2}
	{
		\def\old_var{DefaultValue}
		\setcounter{diff_power}{0}
		
		\clist_map_inline:nn{#3}
		{
			\def\new_var{##1}
			\ifnum \thediff_power=0
				\stepcounter{diff_power}
			\else
				\tl_if_eq:NNTF \new_var \old_var
				{\stepcounter{diff_power}}
				{
					#1 \hiddenpower{\old_var}{\thediff_power}
					\setcounter{diff_power}{1}
				}
			\fi

			\def\old_var{##1}
		}
		
		#1 \hiddenpower{\old_var}{\thediff_power}
	}
}
\def\Label#1{\label{#1}\ifmmode\llap{[#1] }\else 
  \marginpar{\smash{\hbox{\tiny [#1]}}}\fi} 
  \def\Label{\label} 

\ExplSyntaxOff

\newlength{\bibitemsep}
\newlength{\bibparskip}
\let\oldthebibliography\thebibliography
\renewcommand\thebibliography[1]{%
  \oldthebibliography{#1}%
  \setlength{\parskip}{\bibitemsep}%
  \setlength{\itemsep}{\bibparskip}%
}

\newtheorem{thm}{Theorem}[section]
\newtheorem{lemma}[thm]{Lemma}
\newtheorem{cor}[thm]{Corollary}
\newtheorem{pro}[thm]{Proposition}
\newtheorem{defn}[thm]{Definition}

\newtheorem{rem}[thm]{Remark}
\newtheorem{exa}[thm]{Example}

\newcommand{\id}{\operatorname{id}}

\DeclareMathOperator{\EEnd}{End}

\newenvironment{wideregion}[1][0mm]
    {\vspace{#1}\list{}{\leftmargin-2cm\rightmargin\leftmargin}\item\relax}
    {\endlist}

\newenvironment{widegather }{\wideregion[-9mm]\gather}{\endgather\endwideregion}

\begin{document}

\hfill
 \begin{abstract} 
We investigate involutive, non-combinatorial solutions of the braid equation, viewing them as special deformations of the permutation map. Utilizing these solutions, we identify the associated quantum algebra and introduce it as the $\mathfrak{gl}_{k,m}$
Yangian. This newly derived Yangian is distinct from the known Yangian of the general linear Lie superalgebra; crucially, as a Hopf algebra, it possesses the standard tensor product algebra structure. The underlying algebra $\mathfrak{gl}_{k,m}$
is also introduced as a novel structure and constitutes a subalgebra of the Yangian. We then construct specific highest-weight modules of $\mathfrak{gl}_{k,m}$
that simultaneously yield the eigenstates of spin-chain-like ``Hamiltonians'', which are defined as the sum of the generators of the
$A$-type braid group. Furthermore, we study the highest-weight representations and the corresponding combinatorial bases for
$\mathfrak{gl}_{1,1}$, explicitly linking them to specific shapes of Young tableaux.

\end{abstract}
\maketitle

\date{}




\section{Introduction}

\noindent 
The study of non-commutative algebras has long placed the quantum Yang–Baxter equation \cite{Baxter, Yang} at the crossroads of representation theory \cite{FultonHarris}, low-dimensional topology \cite{Kassel}, quantum integrability and quantum algebras \cite{FRT,Jimbo1, Jimbo2, Drinfeld}. 
Involutive, combinatorial solutions to the braid equation \cite{Baxter, Yang} have been widely studied (see, for example, \cite{Eti, Gateva, Jespers, Ru05, Ru07})--particularly since Drinfel’d proposed the set-theoretic version 
\cite{Drin} of the Yang-Baxter equation. By contrast, in the present investigation we focus on a class of non-combinatorial, non-parametric, involutive solutions of the braid equation (\ref{braid}) (see also \cite{Lechner}) and construct the quantum universal enveloping algebras--specifically of Yangian type--associated with these non-combinatorial solutions. To achieve this we employ the  Faddeev-Reshetikhin-Takhtajan (FRT) framework \cite{FRT}, which provides a systematic means to extract a quadratic quantum algebra from a spectral-parameter-dependent solution of the Yang–Baxter equation. Spectral-parameter-dependent solutions are obtained from  solutions of the braid equation via the Baxterization procedure \cite{Jones}. 

Specifically, in Section 2, following the Baxterization of our non-combinatorial solutions,  we employ the FRT formulation \cite{FRT} to derive the associated quantum algebra (Theorem \ref{alg1}). This algebra is
referred to as the \(\mathfrak{gl}_{k,m}\) Yangian, denoted by \(Y(\mathfrak{gl}_{k,m})\) and
naturally possesses a  Hopf algebra structure (Proposition \ref{hopf}).
The Yangian \(Y(\mathfrak{gl}_{k,m})\) contains an underlying finite subalgebra, \(\mathfrak{gl}_{k,m}\), which itself constitutes a novel non-commutative algebraic structure. We provide a comprehensive analysis of \(\mathfrak{gl}_{k,m}\) by explicitly deriving its quadratic relations and defining its Chevalley–Serre type relations (Corollaries \ref{remg} and \ref{serre})--which are  distinct from those of the standard general linear Lie superalgebra \cite{Kac}--and constructing its fundamental Casimir invariants (Proposition \ref{symm}). Notably, elements of $ \mathfrak{gl}_{k,m} $ satisfy nilpotency conditions, which serve as a signature of supersymmetry, allowing $\mathfrak{gl}_{k,m}$ to operate as a super-algebra. Crucially, the newly derived universal algebra $Y(\mathfrak{gl}_{k,m})$ is distinct from the standard, extensively studied Yangian of the general linear Lie superalgebra \cite{Molev, Nazarov, Zhang}. From a structural perspective, a key distinction lies in the underlying tensor structure: as a Hopf algebra, \(Y(\mathfrak{gl}_{k,m})\) is equipped with the standard, ungraded tensor product algebra structure, such that \(({\mathrm a} \otimes {\mathrm b})({\mathrm c} \otimes {\mathrm d}) = {\mathrm a}{\mathrm c} \otimes {\mathrm b}{\mathrm d}\), for \({\mathrm a}, {\mathrm b}, {\mathrm c}, {\mathrm d} \in Y(\mathfrak{gl}_{k,m})\), thereby circumventing the standard \(\mathbb{Z}_{2}\)-graded super-commutativity signs. This requires deforming the algebra coproducts, as shown in Proposition \ref{hopf}, and introducing a family of additional group elements into the algebra, both of which are dictated by the FRT formulation. Consequently, this framework provides a surprising mechanism to generate super-type symmetries entirely through coproduct deformation, leaving the underlying tensor multiplication completely ungraded.
 
 The primary novelty of our approach lies in treating both the involutive braid solution and its associated quantum universal enveloping algebra as non-parametric deformations of the  permutation (or flip) map and the classical $\mathfrak{gl}_{k+m}$ Yangian \cite{yangians}, respectively. Within this framework, the emerging supersymmetry is elegantly encoded--and effectively hidden--directly in the deformation of the permutation operator.  It is worth noting that our construction is conceptually parallel to the classic Radford–Majid bosonization framework \cite{Radford, Majid2}. 
 While standard bosonization introduces a bicrossproduct to map a braided structure to an ordinary Hopf algebra, our framework departs from this paradigm entirely. By using our braiding--a non-parametric deformation of the permutation map-- we retain a strictly ungraded tensor product multiplication, but absorb the super-algebraic data entirely into a coassociative coproduct and a commutative family of group elements (Proposition \ref{hopf}). This provides a clean algebraic framework that completely bypasses the computational burden of explicit \(\mathbb{Z}_{2}\)-graded signs.

In Section 3, centralizers of the $A$-type Artin braid group are  identified as coproducts of representations of the algebra \(\mathfrak{gl}_{k,m}\), allowing us to construct distinct combinatorial bases for finite-dimensional irreducible representations of the algebra (Theorem \ref{basic12}). These bases also provide eigenstates of operators, known as quantum spin-chain-like ``Hamiltonians'', constructed from the sum of all generators of the \(A\)-type Artin braid group. For instance, in the special case of the algebra $\mathfrak{gl}_{1,1}$ the quantum Hamiltonian reduces to a variant of the Heisenberg XX model \cite{XX}. 
In Section 4, we provide an explicit classification for \(\mathfrak{gl}_{1,1}\), proving that its highest-weight modules are inherently two-dimensional. Furthermore, we construct an explicit bijection between these combinatorial bases and specific hook-like shapes of super-symmetric semi-standard Young tableaux \cite{BR83, Mac92}, offering a concrete combinatorial realization of the representations.

\subsection{Preliminaries}
Prior to presenting the main analysis, we review essential preliminaries concerning involutive solutions to the braid equation and the process of Baxterization \cite{Jones}.

\noindent  Formally,  given a vector space $V$, a solution to the Yang-Baxter equation in its braided form is a linear map
\(\check r: V \otimes V \to V \otimes V\) satisfying the relation:
\begin{equation}\label{braid}
\left(\check r\otimes\id_V\right)
\left(\id_V\otimes\check r\right)
\left(\check r\otimes\id_V\right)
= 
\left(\id_V\otimes\check r\right)
\left(\check r\otimes\id_V\right)
\left(\id_V\otimes\check r\right). 
\end{equation}
If $\check r$ is a solution such that $\check r^2=\id_{V \otimes V}$, then $\check r$ is said to be \emph{involutive}.
We  also recall the connection between the braid equation (\ref{braid}) and the 
Yang-Baxter equation in its more standard form \cite{Jimbo1, Jimbo2}. 
We introduce the linear map $r: V\otimes V\rightarrow V\otimes V,$ such that $r = {\mathrm P} \check r,$ 
where ${\mathrm P}: V\otimes V\rightarrow V\otimes V$ 
is the permutation or flip map: ${\mathrm P}(u\otimes w) = w \otimes u,$  $u,w \in V.$ Hence, $r: V \otimes V \to V \otimes V$  
satisfies the Yang-Baxter equation \cite{Baxter, Yang}:
\begin{equation}
r_{12}\ r_{13}\  r_{23} = r_{23}\ r_{13}\ r_{12},  \label{YBE}
\end{equation} 
where if $\ r = \underset{j}{\sum} a_j \otimes b_j,$ $a_j, b_j \in \mbox{End}(V),$ we then denote $\ r_{12} =\underset{j}{\sum} a_j \otimes b_j \otimes \id_V,$  $r_{23} =\underset{j}{\sum}  \id_V \otimes a_j \otimes b_j$ 
and $r_{13} =\sum_j a_j \otimes  \id_V \otimes b_j$ ({\it index notation}). If $\check r$ is involutive then $r$ satisfies $r_{12} r_{21} = \mbox{id}_{V\otimes V}$ and is called {\it reversible}.

It is also useful to present the precise definition of involutive, combinatorial and non-combinatorial solutions of the braid equation.
\begin{defn} \label{proto}
    Let $X = \big \{x_1, x_2, \ldots,x_n \big \}$ and $\sigma_x,\ \tau_y: X \to X,$ such that $y \mapsto \sigma_x(y)$ and $x \mapsto \tau_y(x).$ Let also $\big \{ e_x \big \}_{x\in X}$ be the standard canonical basis of ${\mathbb C}^n$, and  $\check r: {\mathbb C}^n \otimes {\mathbb C}^n \to {\mathbb C}^n \otimes {\mathbb C}^n,$ such that $\check r (e_x \otimes e_y )= \underset{z,w \in X}{\sum}{\mathfrak c}_{z,x|w,y} e_{z} \otimes e_w,$ ${\mathrm c}_{z,x|w,y} \in {\mathbb C},$ for all $x,y,z,w \in X,$ be an involutive solution of the braid equation. Then $\check r$ is said to be: 
    
    \begin{enumerate}

        \item an involutive \underline{set-theoretic or combinatorial} solution of the braid equation if\\ ${\mathfrak c}_{z,x| w,y} = \delta_{z,\sigma_{x}(y)} \delta_{w,\tau_{y}(x)}.$ 

        \item an involutive \underline{non-combinatorial} solution of the braid equation if ${\mathfrak c}_{z,x| w,y} \neq \delta_{z,\sigma_{x}(y)} \delta_{w,\tau_{y}(x)}.$
        \end{enumerate}
\end{defn}
Relevant useful definitions on combinatorial and non-combinatorial maps are given in \cite{mybraided}; see also \cite{DoiSmo} for related expressions of combinatorial solutions.
\begin{exa} Two simple examples of 
involutive, combinatorial solutions of the braid equation are given below:
\begin{enumerate}
\item The permutation or flip map: $\check r (e_x \otimes e_y) = e_y \otimes e_x,$ $x, y \in \big \{1,2, \ldots,  n\big \}.$

\item The Lyubashenko solution \cite{Drin}: $\check r (e_x \otimes e_y) = e_{y+1} \otimes e_{x-1},$ $x,y \in \big \{1,2, \ldots, n\big \},$
where the addition (subtraction) is defined $\mod n.$
\end{enumerate}
\end{exa}

\subsection*{The braid group}
We recall the Artin presentation of the braid group, i.e. 
the standard braid group on $N$ strands.
\begin{defn}  \label{Artin}
The $A$-type Artin braid group $B_{N}$ is defined by generators $\sigma_1, \sigma_2, \ldots, \sigma_{N-1}$ and relations
\begin{equation}
\sigma_i\sigma_{i+1}\sigma_i = \sigma_{i+1} \sigma_i \sigma_{i+1}, ~~ \mbox{and }~~\sigma_i\sigma_j=\sigma_j\sigma_i~~\mbox{if}~~|i-j|>1. \nonumber
\end{equation}
\end{defn}
Every braid on $N$ strands determines a permutation on $N$ elements. This assignment becomes a map $B_N \to S_N,$ 
such that $\sigma_i \in B_N$ is mapped to the transposition $s_i = (i, i+1) \in S_N$. 
These transpositions generate the symmetric group, satisfy the braid group relations and in addition  $s_i^2 =1.$ This transforms the Artin presentation of the braid group into the Coxeter presentation of the symmetric group. Note that in general $\sigma_i^2 \neq 1.$

We focus here on involutive tensor representations of the braid group. 
Specifically, let $\rho: B_N \to \EEnd(V^{\otimes N}),$ such that $\sigma_j \mapsto \check r_j,$ $j \in [N-1]$ where
\begin{equation} 
\check r_j := \id_V^{\otimes (j-1)}  \otimes  \check r \otimes \id_V^{\otimes (N-j-1)}, \label{tensor}
\end{equation}
$ \check r \in \EEnd(V \otimes V)$ is an involutive solution of the braid equation, i.e. $\check r$ satisfies the braid identity and $\check r^2 = \id.$ 


\subsection*{Involutive solutions and Baxterization}

\noindent 
We recall the Baxterization \cite{Jones} of involutive solutions of the 
braid and Yang-Baxter equations. 

We first recall the braid equation in the presence of spectral parameters 
$\lambda_1,\ \lambda_2 \in {\mathbb C}$ ($\delta = \lambda_1 - \lambda_2$) reads as \cite{Jones}:
\begin{equation}
\check R_{12}(\delta)\ \check R_{23}(\lambda_1)\ \check R_{12}(\lambda_2) = \check R_{23}(\lambda_2)\
 \check R_{12}(\lambda_1)\ \check R_{23}(\delta), \label{YBE1}
\end{equation}
where $\check R: V \otimes V\to V \otimes V.$

We focus on solutions of (\ref{YBE1}), obtained by Baxterization of involutive solutions $\check r$, and being of the form $~\check R(\lambda) = \lambda \check r + \id_V^{\otimes 2},$ $\lambda \in {\mathbb C}.$ Let also $R = {\mathrm P} \check R$, then
\begin{equation}
R(\lambda)= \lambda r + {\mathrm P}, \label{braid2}
\end{equation}
and $R:V\otimes V \to V\otimes V$ is a solution of the parametric Yang-Baxter equation \cite{Baxter, Yang},
\begin{equation}
 R_{12}(\delta)\  R_{13}(\lambda_1)\  R_{23}(\lambda_2) = R_{23}(\lambda_2)\ R_{13}(\lambda_1)\ R_{12}(\delta). \label{YBE2}
\end{equation}

Moreover, it follows that $R_{12}(\lambda)\  R_{21}(-\lambda) = (-\lambda^2 +1)  \id_V^{\otimes 2},$ then $R$ is said to be unitary.


\section{The quantum algebra \texorpdfstring{$\mathfrak{gl}_{k,m}$}{gl} }

\noindent We now examine a special class of involutive solutions of the braid equation, which can be viewed as non-parametric diagonal deformations of the permutation map; these are derived in Proposition \ref{pro11} below (see also \cite{Lechner}). Before proceeding with our analysis, we introduce the notation used throughout the manuscript.
The set of all non-negative integers is denoted ${\mathbb N},$ the set of all positive integers is denoted ${\mathbb N}^*,$ $[n] := \big \{ 1,2, \ldots, n \big \}$ and $X_k^+:=\big \{k+1, k+2, \dots, n \big \},$ $k < n.$
Recall also that $\big \{ e_x\big \}_{x \in [n]}$ is the standard canonical basis of ${\mathbb C}^n,$ and the elementary $n\times n$ matrices $e_{x,y}: = e_x e_y^T$ ($^T$ denotes transposition), such that $(e_{x,y})_{z,w}= \delta_{x,z}\delta_{y,w},$ $x,y,z,w\in [n].$ $\big \{e_{x,y}\big\}_{x,y \in [n]}$ is a basis of $\mbox{End}({\mathbb C}^n).$ 

\begin{pro} \label{pro11} Let ${\mathrm P}: {\mathbb C}^n \otimes {\mathbb C}^n  \to {\mathbb C}^n \otimes {\mathbb C}^n $ be the permutation map explicitly expressed as $ {\mathrm P} = \underset{x,y \in [n]}{\sum} e_{x,y} \otimes e_{y,x}.$ Let also ${\mathrm D}= \id- 2\alpha \underset{x \in X_k^+ }{\sum} e_{x,x} \otimes e_{x,x},$ then $\check r = {\mathrm D} {\mathrm P}$ is an involutive solution of the braid equation (\ref{braid}) if and only if $\alpha=0$ or $\alpha =1.$
\begin{proof}
The proof is simple by first showing that ${\mathrm D}^2 =\id + 4\alpha(\alpha-1)\underset{x\in X_k^+}{\sum}  e_{x,x} \otimes e_{x,x}$ and  ${\mathrm P}{\mathrm D} = {\mathrm D} {\mathrm P} = {\mathrm P} - 2 \alpha \underset{x\in X_k^{+}}{\sum} e_{x,x }\otimes e_{x,x}.$ 
The involutivity condition is satisfied if and only if ${\mathrm D}^2 =\id,$ i.e. $\alpha =0$ or $\alpha=1.$  Using also the fact that ${\mathrm P}$ satisfies the braid equation and ${\mathrm P} (A \otimes B) {\mathrm P} = B \otimes A,$ for $A,\ B \in \mbox{End}({\mathbb C}^n),$  it is immediately shown that ${\mathrm D}{\mathrm P}$ is also a solution of the braid equation.
\end{proof}
\end{pro}
 
Let $B =\big \{e_x\big \}_{x\in [n]}$ be the standard canonical basis of ${\mathbb C}^{n},$ then the action of the $\check r$-matrix of Proposition \ref{pro11} on $B^{ \otimes 2}$ is given by ($\alpha \in \big \{0,1\big \}$)
\begin{equation}
    \check r(e_{x} \otimes  e_{y})= \Bigg  \{ \begin{matrix} & e_{x} \otimes e_{x}, ~~~~~~~~~~~~~~~x=y \in [k],  \\ 
 & (1-2\alpha)e_{x} \otimes e_{x},~~~~x = y \in X_k^+ \label{action01}
 \\ & e_{y} \otimes  e_{x}, ~~~~~~~~~~~~~~~x\neq y \in [n]. \end{matrix},\end{equation}
 Notice that if $\alpha =0$ one recovers the flip map (combinatorial solution), whereas if $\alpha=1$ the solution is non-combinatorial.
The action of the elements $\check r_j,$ $j \in [N-1]$ on $B^{\otimes N}$ is given by (see also (\ref{tensor})) 
\begin{equation}
    \check r_j(e_{x_1} \otimes \ldots e_{x_j} \otimes e_{x_{j+1}} \otimes \ldots e_{x_N})= \Bigg  \{ \begin{matrix} & e_{x_1} \otimes \ldots e_{x_j} \otimes e_{x_{j} }\otimes \ldots e_{x_N}, ~~~~~~~~~~~~~~~x_j=x_{j+1} \in [k],  \\ 
 & (1-2\alpha) e_{x_1} \otimes \ldots e_{x_j} \otimes e_{x_j} \otimes \ldots e_{x_N},~~~~x_j = x_{j+1} \in X_k^+ \label{action1}
 \\ & e_{x_1} \otimes \ldots e_{x_{j+1}} \otimes e_{x_j} \otimes \ldots  e_{x_N}, ~~~~~~~~~~~~x_j\neq x_{j+1} \in [n]. \end{matrix},\end{equation}
 
We recall the definition of quadratic algebras \(\mathfrak{A}\) associated with solutions to the Yang–Baxter equation, which emerge from the FRT construction \cite{FRT}. Specifically, a quadratic algebra can be derived from any given solution of the Yang–Baxter equation. As a primary example, the Yangian of any classical Lie algebra is examined in \cite{yangians}, whereas in \cite{Nazarov} the Yangians of Lie super-algebras are studied.
\begin{defn} \label{frt1}
Let $R(\lambda)\in \EEnd({\mathbb C}^n\otimes {\mathbb C}^n)$ be a solution of the Yang-Baxter equation (\ref{YBE2}), $\lambda \in {\mathbb C}.$ Let also $L(\lambda)  := \underset{x,y\in [n]}{\sum} e_{x,y} \otimes L_{x,y}(\lambda) \in  \EEnd(\mathbb{C}^n) \otimes {\mathfrak A},$ $L_{x,y}(\lambda) =\underset{p\in {\mathbb N} }{\sum}\lambda^{-p} L^{(p)}_{x,y}\in {\mathfrak A},$ where ${\mathfrak A}$ is the quantum algebra associated to the solution $R,$ and is defined as the quotient of the free unital, associative ${\mathbb C}$-algebra, generated by indeterminates $\Big \{ L^{(p)}_{x,y}|\ x,y \in [n],\ p\in {\mathbb N}\Big\}$ and relations 
\begin{equation}
R_{12}(\lambda_1, \lambda_2)\ L_1(\lambda_1)\ L_2(\lambda_2) = L_2(\lambda_2)\ 
L_1(\lambda_1)\  R_{12}(\lambda_1,  \lambda_2), \label{RTT}
\end{equation}
where $\lambda_1, \lambda_2 \in {\mathbb C},$ $R_{12} =R \otimes  1_{\mathfrak A },$  
$L_{1}=\underset{{x,y \in [n] }}{\sum}e_{x,y}\otimes \id_n \otimes  L_{x,y}$\footnote{Notice that in $L$ in addition to the indices 1 and 2 in (\ref{RTT}) there is also an implicit ``quantum index'' $3$ associated to ${\mathfrak A},$ 
which for now is omitted, i.e. one writes $L_{13},\ L_{23}$.}, $L_{2}=\underset{x,y \in [n]}{\sum} \id_n \otimes  e_{x,y}\otimes  L_{x,y}$ ($\id_n$ denotes the $n \times n$ identity matrix). 
\end{defn}
Note that if equation (\ref{RTT}) holds, then $R$ is necessarily a solution of the Yang-Baxter equation (\ref{YBE2}) (see for instance \cite{majid}). Essentially, definition \ref{frt1} states that different choices of solutions of the Yang-Baxter equation yield distinct quantum algebras. This formulation is also known as the RTT presentation of a quantum algebra. We emphasize once more that here we retain strictly ungraded tensor products, i.e. $(e_{x,y} \otimes e_{z,w}) (e_{x',y'} \otimes e_{z',w'}) = e_{x,y} e_{x',y'} \otimes e_{z,w} e_{z',w'},$ for all $x,y,z,w,x',y',z',w' \in [n],$ $e_{x,y} \in \mbox{End}({\mathbb C}^n),$ and $({\mathrm a} \otimes {\mathrm b})({\mathrm c} \otimes {\mathrm d}) = {\mathrm a}{\mathrm c} \otimes {\mathrm b}{\mathrm d}$, for ${\mathrm a}, {\mathrm b}, {\mathrm c}, {\mathrm d} \in {\mathfrak A}.$ 

Subsequently, we consider the involutive solution to the braid equation from Proposition \ref{pro11}. We retain the parameter \(\alpha \in \big \{0, 1 \big \}\) to track the deformation at \(\alpha = 1\), comparing it to the undeformed case (\(\alpha = 0\)), which corresponds to the \(\mathfrak{gl}_{n}\) Yangian \cite{yangians}.
\begin{thm} \label{alg1} Let $r$ be a solution of the parameter-free Yang-Baxter equation and $R(\lambda) = \lambda r + {\mathrm P}$ be the Baxterized solution (\ref{braid2}) (recall ${\mathrm P}$ is the permutation map). Let also  $L(\lambda)  := \underset{x,y\in [n]}{\sum} e_{x,y} \otimes L_{x,y}(\lambda) \in  \EEnd(\mathbb{C}^n) \otimes {\mathfrak A},$ where $L_{x,y}(\lambda) =\underset{p\in {\mathbb N} }{\sum}\lambda^{-p} L^{(p)}_{x,y}\in {\mathfrak A}$  and  $L^{(p)} = \underset{x \in [n]}{\sum} e_{x,y} \otimes L^{(p)}_{x,y}$ for all $p\in {\mathbb N}.$ Consider $r = \id - 2 \alpha \underset{ x \in X_k^+}{\sum}e_{x,x} \otimes e_{x,x},$ $~k+m =n,$ $~\alpha \in \big \{0,1\big \}.$ Also:
\begin{enumerate}
\item  If $\alpha =0,$ consider $L^{(0)}_{x,y} = \delta_{x,y} h_x,$  $h_x =1$ for all $x \in [n].$ 
\item If $\alpha =1,$ consider $L^{(0)}_{x,y} = \delta_{x,y} h_x,$  $h_x =1$ for all $x \in [k],$ $h_x$ 
is an invertible element with $h_x \neq 1,$ for all $x \in X_k^+.$  
\end{enumerate}
Then, the associated quantum algebra, denoted $Y(\mathfrak{g}_{\alpha}(k,m)),$ $\alpha \in \big \{0,1 \big \},$
is generated by indeterminates  $L^{(p)}_{x,y},$ $x,y \in [n],$ $p \in {\mathbb N}$ and relations for all $x,y,z \in [n],$ $l,p \in {\mathbb N}$:
\begin{eqnarray}
 &&\alpha=1:~~~ \big [h_x,\ h_y\big ]=0,  x,y \in X_k^+, \nonumber \\ &&\qquad \qquad ~\big [L^{(p)}_{x,y}, h_z  \big ] = 2L_{x,y}^{(p)}h_x \delta_{z,x} -2h_y L_{x,y}^{(p)} \delta_{z,y}, ~~~z\in X_k^{+}
\nonumber \\
&& \alpha \in \big \{0,1\big \}: ~~ \big[ L^{(p+1)}_{x,y},\ L^{(l)}_{z,w} \big ]  -\big[ L^{(p)}_{x,y},\ L^{(l+1)}_{z,w} \big ]-2\alpha \big ( L^{(p+1)}_{x,y}L^{(l)}_{x,w} -
L_{x,y}^{(p)}L^{(l+1)}_{x,w}\big )\delta_{x,z}\big|_{x \in X_k^+}  \nonumber\\ && \qquad \qquad ~~~~+2\alpha \big (L_{z,y}^{(l)} L^{(p+1)}_{x,y} -L_{z,y}^{(l+1)} L^{(p)}_{x,y}\big )\delta_{y,w} \big |_{y\in X_k^+}= L^{(l)}_{z,y}L^{(p)}_{x,w}  - L^{(p)}_{z,y} L^{(l)}_{x,w} \label{funda2}
\end{eqnarray}
where $[\ ,\ ]: Y(\mathfrak{g}_{\alpha}(k,m) )\times Y(\mathfrak{g}_{\alpha}(k,m))  \to Y(\mathfrak{g}_{\alpha}(k,m)),$ such that $[{\mathrm a},\ {\mathrm b}] = {\mathrm a}{\mathrm b}-{\mathrm b}{\mathrm a},$ for all ${\mathrm a},{\mathrm b} \in Y(\mathfrak{g}_{\alpha}(k,m) ).$
\end{thm}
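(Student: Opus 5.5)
The plan is to substitute $R(\lambda_1,\lambda_2)=R(\lambda_1-\lambda_2)=\delta\, r+{\mathrm P}$, with $\delta=\lambda_1-\lambda_2$ and $r=\id-2\alpha\sum_{x\in X_k^+}e_{x,x}\otimes e_{x,x}$, into the RTT relation (\ref{RTT}) and read off matrix elements. I will write $\Pi:=\sum_{x\in X_k^+}e_{x,x}\otimes e_{x,x}$, so that $R(\delta)=\delta(\id-2\alpha\Pi)+{\mathrm P}$. Using the ungraded tensor product rules, I compute both sides of (\ref{RTT}) as $\sum e_{x,y}\otimes e_{z,w}\otimes(\cdot)_{xy,zw}$. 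The identity part gives $\delta L_{x,y}(\lambda_1)L_{z,w}(\lambda_2)$ on the left and $\delta L_{z,w}(\lambda_2)L_{x,y}(\lambda_1)$ on the right. The permutation part gives $L_{z,y}(\lambda_1)L_{x,w}(\lambda_2)$ on the left and $L_{z,y}(\lambda_2)L_{x,w}(\lambda_1)$ on the right, since ${\mathrm P}$ swaps the first auxiliary row index (left) or column index (right). The $\Pi$ terms contribute $-2\alpha\delta\,\delta_{x,z}|_{x\in X_k^+}L_{x,y}(\lambda_1)L_{x,w}(\lambda_2)$ on the left and $-2\alpha\delta\,\delta_{y,w}|_{y\in X_k^+}L_{z,y}(\lambda_2)L_{x,y}(\lambda_1)$ on the right. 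This yields a single functional identity
\[
(\lambda_1-\lambda_2)\big([L_{x,y}(\lambda_1),L_{z,w}(\lambda_2)]-2\alpha\,\delta_{x,z}L_{x,y}(\lambda_1)L_{x,w}(\lambda_2)+2\alpha\,\delta_{y,w}L_{z,y}(\lambda_2)L_{x,y}(\lambda_1)\big)=L_{z,y}(\lambda_2)L_{x,w}(\lambda_1)-L_{z,y}(\lambda_1)L_{x,w}(\lambda_2),
\]
with the Kronecker deltas restricted to $X_k^+$.

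Next I expand both sides in $\lambda_1^{-p-1}\lambda_2^{-l}$ and collect coefficients, in the standard Yangian way. Multiplying $\lambda_1\sum_p\lambda_1^{-p}(\cdot)^{(p)}$ shifts $p\to p+1$, and multiplying by $-\lambda_2$ shifts $l\to l+1$. The coefficient of $\lambda_1^{-p}\lambda_2^{-l}$ on the left is then the combination $[L^{(p+1)}_{x,y},L^{(l)}_{z,w}]-[L^{(p)}_{x,y},L^{(l+1)}_{z,w}]$ together with the analogous $\alpha$-corrections. On the right it is $L^{(l)}_{z,y}L^{(p)}_{x,w}-L^{(p)}_{z,y}L^{(l)}_{x,w}$. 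This reproduces (\ref{funda2}). I will be careful to track ordering: in the $\delta_{y,w}$ term the $\lambda_2$ factor stands on the left.

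The relations involving $h_z$ come from the lowest-order terms. These are the coefficients in which one index hits $p=-1$, i.e.\ the $\lambda_1^{1}$ or $\lambda_2^{1}$ terms, where $L^{(0)}$ appears. Taking $l=0$ with the $\lambda_2$-linear part, and substituting $L^{(0)}_{z,w}=\delta_{z,w}h_z$, gives $[L^{(p)}_{x,y},h_z]-2\alpha\delta_{x,z}L^{(p)}_{x,y}h_x+2\alpha\delta_{y,z}h_yL^{(p)}_{x,y}=0$ for $z\in X_k^+$; when $z\in[k]$ or $\alpha=0$ one has $h_z=1$ and the relation is trivial. Specializing to $p=0$ with $x=y\in X_k^+$ gives $[h_x,h_z]$ plus terms that cancel, hence $[h_x,h_y]=0$. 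For $\alpha=1$ I will check consistency: $h_x\neq 1$ is allowed because the correction terms $2h_x h_x-2h_xh_x$ vanish. For $\alpha=0$ the $h_x=1$ choice matches the usual Yangian. I will also check that the $\lambda_1\lambda_2$-independent top term imposes nothing new.

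The main obstacle is the bookkeeping of the edge coefficients and the ordering of the noncommuting products in the $\Pi$-terms, namely getting $L^{(p)}_{x,y}h_x$ versus $h_yL^{(p)}_{x,y}$ and the signs right. I would first verify the final relations in the simplest case $n=1$, $k=0$, where $r=-\id$, as a sanity check of the signs.
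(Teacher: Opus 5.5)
Your proposal is correct and follows essentially the same route as the paper. You substitute $R(\lambda_1-\lambda_2)=(\lambda_1-\lambda_2)r+{\mathrm P}$ and the $\lambda^{-1}$-expansion of $L$ into (\ref{RTT}), read off the coefficients of $\lambda_1^{-p}\lambda_2^{-l}$ to obtain (\ref{funda2}), and use the edge coefficients, where $L^{(0)}_{x,y}=\delta_{x,y}h_x$ enters, to get the $h_z$-relations. The only difference is cosmetic: the paper first passes to $\check r={\mathrm P}r$ and the intermediate identities (\ref{funda}), whereas you take matrix entries of $R$ directly. Your entries, orderings and signs agree with the paper's, and the $\lambda_1^{1}$ edge coefficient you did not spell out just gives an equivalent form of the same $h_z$-relation.
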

\begin{proof}
The proof is based on the fundamental relation (\ref{RTT}) and the form of the Baxterized $R$-matrix. 
First recall, $R_{12}(\lambda) = R(\lambda) \otimes 1_Y,$ and
\begin{eqnarray}
&& L_1(\lambda) = \sum_{z, w \in [n]} e_{z,w} \otimes \id_n \otimes L_{z,w}(\lambda),\ \quad  L_2(\lambda)= \sum_{z, w \in [n]}
\id_n  \otimes  e_{z,w}  \otimes L_{z,w}(\lambda).  \label{def}
\end{eqnarray} 
The exchange relations among the various generators of the associated quantum algebra 
are derived via (\ref{RTT}) and after expressing $L$ as a formal power series expansion 
$L(\lambda) = \underset{p\geq 0}{\sum} {L^{(p)}\lambda^{-p}}$.
Substituting expression (\ref{braid2}), and the $\lambda^{-1}$ expansion of $ L(\lambda)$ in (\ref{RTT}) and focusing on the coefficients of the terms $\lambda_1^{-p} \lambda_2^{-l}$ we arrive at
the following relations of  associated 
to the $R$-matrix, $R(\lambda) = \lambda r +{\mathrm P},$ $\lambda \in {\mathbb C},$ for all $l,p \in {\mathbb N}^*,$ recall also $\check r = {\mathrm P}r,$
\begin{eqnarray}
& & \check r_{12} L^{(0)}_{1} L^{(0)}_{2} = L^{(0)}_{1} L_2^{(0)} \check r_{12}, ~~~ \check r_{12} L^{(0)}_{1} L^{(p)}_{2} = L^{(p)}_{1} L_2^{(0)} \check r_{12}, \nonumber \\
& &\check r_{12} L_{1}^{(p+1)} L_2^{(l)} -\check  r_{12} L_1^{(p)} L_2^{(l+1)} +  L_1^{(p)} L_2^{(l)}  = L_1^{(l)} L_{2}^{(p+1)} \check r_{12} -  L_1^{(l+1)} L_2^{(p)}\check r_{12} +  L_1^{(l)} L_2^{(p)}.
\label{funda}
\end{eqnarray}

We now focus on $\check r = {\mathrm P} - 2\alpha \underset{x\in X_k^+}{\sum}e_{x,x} \otimes e_{x,x},$ and recall  from (\ref{def}),
that $L^{(p)}_{x,y},$ $x,y \in [n]$ are the generators of the associated quantum algebra and in this case $L^{(0)}_{x,y} = \delta_{x,y}h_x,$ also $\check r_{12} = \check r \otimes  \mbox{id}_{\mathfrak A }.$ 
Substituting the above expressions in (\ref{funda}) we arrive at the defining relations (\ref{funda2}). Specifically, for all $l,p \in {\mathbb N}$ and $\alpha \in \big \{0, 1  \big \},$
\begin{eqnarray}
& & \check r_{12} L^{(0)}_{1} L^{(0)}_{2} = L^{(0)}_{1} L_2^{(0)} \check r_{12}\ \Rightarrow\ \big [h_x,\ h_y \big ] =0, ~~~~x,y \in X_k^+\nonumber\\ 
&& \check r_{12} L^{(p)}_{1} L^{(0)}_{2} = L^{(0)}_{1} L_2^{(p)} \check r_{12}\ \Rightarrow\ \big [L^{(p)}_{x,y}, h_z  \big ] = 2\alpha \big (L_{x,y}^{(p)}h_x \delta_{z,x} -h_y L_{x,y}^{(p)} \delta_{z,y} \big ), ~~~z\in X_k^{+}
\end{eqnarray}
and
\begin{eqnarray}
& &\check r_{12} L_{1}^{(p+1)} L_2^{(l)} -\check  r_{12} L_1^{(p)} L_2^{(l+1)} +  L_1^{(p)} L_2^{(l)}  = L_1^{(l)} L_{2}^{(p+1)} \check r_{12} -  L_1^{(l+1)} L_2^{(p)}\check r_{12} +  L_1^{(l)} L_2^{(p)}\ \Rightarrow \nonumber\\
&& \big[ L^{(p+1)}_{x,y},\ L^{(l)}_{z,w} \big ]  -\big[ L^{(p)}_{x,y},\ L^{(l+1)}_{z,w} \big ]-2\alpha \big ( L^{(p+1)}_{x,y}L^{(l)}_{x,w} -
L_{x,y}^{(p)}L^{(l+1)}_{x,w}\big )\delta_{x,z}\big|_{x \in X_k^+}  \nonumber\\ && +2\alpha \big (L_{z,y}^{(l)} L^{(p+1)}_{x,y} -L_{z,y}^{(l+1)} L^{(p)}_{x,y}\big )\delta_{y,w} \big |_{y\in X_k^+}= L^{(l)}_{z,y}L^{(p)}_{x,w}  - L^{(p)}_{z,y} L^{(l)}_{x,w}. \label{funda3}
\end{eqnarray}
\end{proof}
If  $\alpha =0$ the quantum algebra is the familiar $\mathfrak{gl}_{n}$ Yangian, i.e. $Y(\mathfrak{g}_{0}(k,m) )  = :Y(\mathfrak{gl}_{n})$ (recall $k+m=n$) \cite{yangians}.
If $\alpha =1,$ the associated quantum algebra is called the $\mathfrak{gl}_{k,m}$ Yangian and is denoted $Y(\mathfrak{gl}_{k,m})$ (i.e. $Y(\mathfrak{g}_{1}(k,m) ) =: Y(\mathfrak{gl}_{k,m})$). 
We focus henceforth on the case $\alpha =1.$
\begin{cor} \label{remg} The algebra $\mathfrak{gl}_{k,m}$ generated by indeterminates $h_z,$ $z\in X_k^+,$ $L^{(1)}_{x,y},$ $x,y \in [n]$ and relations,
\begin{eqnarray}
&& \big [ h_x,\ h_y \big ] =0, ~~x,y \in X_k^+ \nonumber\\ 
&& \big [L^{(1)}_{x,y},\ h_z \big ] = 2L_{x,y}^{(1)}h_x \delta_{z,x} - 2 h_y L_{x,y}^{(1)}\delta_{y,z}, ~~~{z\in X_k^+} \nonumber\\
&& \big [ L_{x,y}^{(1)},\ L_{z,w}^{(1)}\big ]-  
2L_{x,y}^{(1)}L^{(1)}_{x,w}\delta_{x,z}\big|_{x \in X_k^+} +2 L_{z,y}^{(1)} L^{(1)}_{x,y} \delta_{y,w} \big |_{y\in X_k^+} = L_{z,y}^{(1)}h_x\delta_{x,w}- h_yL_{x,w}^{(1)} \delta_{y,z} \nonumber \label{fund2b}
\end{eqnarray}
is a subalgebra of the Yangian $Y(\mathfrak{gl}_{k,m}).$\\ 
Note in particular $(L^{(1)}_{x,y})^2 =0,$ if $x \in [k]$ and $y \in X_k^+,$ or $x\in X_k^+$ and $y\in[k].$
\end{cor}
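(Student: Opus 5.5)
The plan is to specialize the defining relations (\ref{funda2}) of Theorem \ref{alg1} at $\alpha=1$ to the lowest levels, $p=l=0$, and then use $L^{(0)}_{x,y}=\delta_{x,y}h_x$ with $h_x=1$ for $x\in[k]$.

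\emph{Step 1: the $h$-relations.} The first two relations of the corollary are already among the $\alpha=1$ relations of Theorem \ref{alg1}. Take them with $p=1$ and $x,y,z$ arbitrary, $z\in X_k^+$; they close on the set $\{h_z, L^{(1)}_{x,y}\}$.

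\emph{Step 2: the quadratic relation.} Set $p=l=0$ in the main relation of (\ref{funda2}). The terms containing $L^{(0)}$ inside commutators, $[L^{(0)}_{x,y},L^{(1)}_{z,w}]$, are replaced by $\delta_{x,y}[h_x,L^{(1)}_{z,w}]$. Likewise the terms $L^{(0)}_{x,y}L^{(1)}_{x,w}$ and $L^{(1)}_{z,y}L^{(0)}_{x,y}$ acquire $\delta_{x,y}$ factors. The right-hand side becomes $\delta_{z,y}\delta_{x,w}(h_zh_x-h_zh_x)=0$.

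This does not directly give the displayed relation. So I would instead extract the coefficient in which exactly one factor sits at level $1$ and the other at level $0$ with the $L^{(1)}L^{(1)}$ terms. Concretely, I would use the version of (\ref{funda}) at order $\lambda_1^{-1}\lambda_2^{-1}$:
\[
\check r_{12}L_1^{(1)}L_2^{(0)}\cdots
\]
I would redo this coefficient extraction directly from (\ref{RTT}) with $R(\lambda_1-\lambda_2)=(\lambda_1-\lambda_2)r+{\mathrm P}$. Collecting the $\lambda_1^{0}\lambda_2^{-1}$ and $\lambda_1^{-1}\lambda_2^{0}$ coefficients should produce commutators of $L^{(1)}$ with $L^{(1)}$, where the $\alpha$-terms come from $r$. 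The ${\mathrm P}$-term, which lowers the degree by one, then gives $L^{(1)}L^{(0)}$ products. Substituting $L^{(0)}=\mathrm{diag}(h)$ yields the right side $L^{(1)}_{z,y}h_x\delta_{x,w}-h_yL^{(1)}_{x,w}\delta_{y,z}$.

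The main obstacle is bookkeeping of indices and operator ordering in this extraction, in particular getting the $h$'s on the correct side. I would verify the result by checking that at $\alpha=0$ it reduces to the $\mathfrak{gl}_n$ relation $[L_{x,y},L_{z,w}]=L_{z,y}\delta_{x,w}-L_{x,w}\delta_{y,z}$.

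\emph{Step 3: subalgebra.} Since all these relations are consequences of (\ref{RTT}), the elements $h_z$ and $L^{(1)}_{x,y}$ generate a subalgebra of $Y(\mathfrak{gl}_{k,m})$ in which the listed relations hold. Calling this subalgebra $\mathfrak{gl}_{k,m}$, as presented, is exactly the claim.

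\emph{Step 4: nilpotency.} Set $z=x$, $w=y$ in the quadratic relation with $x\in X_k^+$ and $y\in[k]$. The commutator vanishes, the $\delta_{x,z}$ term gives $-2(L^{(1)}_{x,y})^2$, and the $\delta_{y,w}$ term is absent because $y\notin X_k^+$. The right side is $\delta_{x,y}(\cdots)-\delta_{y,x}(\cdots)=0$ since $x\neq y$. Hence $(L^{(1)}_{x,y})^2=0$. The case $x\in[k]$, $y\in X_k^+$ is symmetric: only the $+2(L^{(1)}_{x,y})^2$ term survives.
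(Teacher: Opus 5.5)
Your route is the paper's: the corollary is read off from the relations of Theorem \ref{alg1}. Steps 1 and 4 are correct as written. The $h$-relations are the $p=1$ case, and setting $z=x$, $w=y$ with exactly one of $x,y$ in $X_k^+$ leaves $\pm 2(L^{(1)}_{x,y})^2=0$.

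The gap is in Step 2, which is the only substantive step and which you never actually carry out. You first try $p=l=0$, which only reproduces $h$--$L^{(1)}$ exchange information. You then point to the order $\lambda_1^{-1}\lambda_2^{-1}$, which is $p=l=1$ and involves $[L^{(2)},L^{(1)}]$. Only at the end do you name the right coefficient, $\lambda_1^{0}\lambda_2^{-1}$, i.e.\ $p=0$, $l=1$ in (\ref{funda2}). Even there your inventory of that coefficient is incomplete. Besides $rL^{(1)}_1L^{(1)}_2$ from $\lambda_1 r$ and the ${\mathrm P}$-term, the $-\lambda_2 r$ part contributes $-rL^{(0)}_1L^{(2)}_2$ on the left and $-L^{(2)}_2L^{(0)}_1 r$ on the right. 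So, as you describe it, the relation is not closed on $\{h_z,L^{(1)}_{x,y}\}$. In the form (\ref{funda}) with $p=0$, $l=1$ it reads
\[
\check r_{12}L^{(1)}_1L^{(1)}_2-\check r_{12}L^{(0)}_1L^{(2)}_2+L^{(0)}_1L^{(1)}_2=L^{(1)}_1L^{(1)}_2\check r_{12}-L^{(2)}_1L^{(0)}_2\check r_{12}+L^{(1)}_1L^{(0)}_2 .
\]
The missing observation is that the two $L^{(2)}$ terms coincide by the first line of (\ref{funda}) with $p=2$. This is the $p=2$ instance of the $h$-exchange relation of Theorem \ref{alg1}. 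In components, for $x=y\in X_k^+$ the extra terms reduce to multiples of $h_xL^{(2)}_{x,w}+L^{(2)}_{x,w}h_x$ ($w\neq x$) and $h_xL^{(2)}_{z,x}+L^{(2)}_{z,x}h_x$ ($z\neq x$), and that relation makes both vanish. What remains is
\[
\check r_{12}L^{(1)}_1L^{(1)}_2-L^{(1)}_1L^{(1)}_2\check r_{12}=L^{(1)}_1L^{(0)}_2-L^{(0)}_1L^{(1)}_2 .
\]
Its $e_{z,y}\otimes e_{x,w}$ component is exactly the displayed quadratic relation, with the $h$'s automatically on the correct sides. The $\alpha=0$ check you propose cannot detect this omission: for $\alpha=0$ one has $L^{(0)}_{x,y}=\delta_{x,y}$, and the $L^{(2)}$ terms vanish identically.

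A smaller point concerns Step 3. Checking that the listed relations hold in $Y(\mathfrak{gl}_{k,m})$ only shows that the algebra they present maps onto the subalgebra generated by the $h_z$ and $L^{(1)}_{x,y}$. To call the presented algebra a subalgebra you also need that no further relations arise. The paper leaves this implicit, but it follows from the evaluation map in part (1) of Remark \ref{gl}, $L(\lambda)\mapsto L^{(0)}+\lambda^{-1}L^{(1)}$. This is a well-defined homomorphism $Y(\mathfrak{gl}_{k,m})\to\mathfrak{gl}_{k,m}$, because (\ref{RTT}) for $L^{(0)}+\lambda^{-1}L^{(1)}$ reduces to exactly the listed relations. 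For instance, the $\lambda_1^{-1}\lambda_2^{-1}$ coefficient becomes ${\mathrm P}L^{(1)}_1L^{(1)}_2=L^{(1)}_2L^{(1)}_1{\mathrm P}$, which is an identity. The evaluation map is a left inverse of the natural map $\mathfrak{gl}_{k,m}\to Y(\mathfrak{gl}_{k,m})$, which is therefore injective.
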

The nilpotency condition \((L^{(1)}_{x,y})^2 = 0\) serves as a mathematical hallmark of supersymmetry. Remarkably, it allows \(\mathfrak{gl}_{k,m}\) to function as a superalgebra despite being defined here in an entirely different framework that bypasses \(\mathbb{Z}_{2}\)-gradation.
\begin{rem} \label{gl}

$ $

\begin{enumerate}

\item There is a map $Y(\mathfrak{gl}_{k,m})\to\mathfrak{gl}_{k,m},$ such that $L^{(p)}_{x,y} \mapsto 0,$ for $p\geq2,$ i.e. $L^{(p)}_{x,y}(\lambda) \mapsto \delta_{x,y} h_x + \lambda^{-1} L^{(1)}_{x,y}$ and $L(\lambda) = L^{(0)} + \lambda^{-1} L^{(1)}.$

\item  Let $L^{(1)}_{x,x+1} =: e_x,$ $L^{(1)}_{x+1,x} =: f_x,$ $L^{(1)}_{x,x} =: \epsilon_x.$
Then every $L^{(1)}_{x,y},$ $|x-y|>1$ is derived by iteration via $\big [L^{(1)}_{x,y},\ L^{(1)}_{y,z}] = -h_yL^{(1)}_{x,z}$ for $x<y<z$ (or $x>y>z$). 
\end{enumerate}
\end{rem}

\begin{cor} \label{serre}
The following Chevalley-Serre type relations are satisfied by the generators $\big\{e_x,\ f_x\big\},$ $x\in [n-1]$ and $ \big \{\epsilon_x,\ h_x\big\},$ $x\in [n]$ ($\epsilon_x ,e_x,f_x$ are defined in Remark \ref{gl}):
\begin{eqnarray}
&& \big [h_x,\ h_y \big ] =0, ~~x,y \in X_k^+, ~~ \big [ \epsilon_z,\ h_x\big ] =0,~~z\in [n],
\nonumber\\ && \big [e_x,\ h_z\big ] = \big [f_x,\ h_z\big ] =0, ~~ z \neq x, x+1,\nonumber\\
&& \big [\big [\xi_x,\ h_x\big ]\big ]  =\big [\big [\xi_x,\ h_{x+1}\big ]\big ]  =0, ~\xi_x \in \big \{ e_x,\ f_x,\big \} x, x+1 \in X_k^+,\nonumber\\
&& \big [\epsilon_y,\ e_x \big ] =\big [\epsilon_y,\ f_x \big ] =0, ~~y \neq x,x+1,\nonumber\\
&& \big [\epsilon_x,\ e_x \big ] = -e_x, ~~\big[\epsilon_x,\ f_x \big ] = f_x,  ~~x\in [k],\nonumber\\
&&\big [\epsilon_{x+1},\ e_x\big ] =e_x, ~\big [\epsilon_{x+1},\ f_x\big ] =-f_x, ~~x+1 \in [k],\nonumber\\
&& \big [\big [\epsilon_x,\ e_x \big ]\big ] = h_xe_x, ~~
\big [\big[\epsilon_x,\ f_x \big ] \big ] = -h_xf_x,  ~~x \in X_k^+,\nonumber\\
&& \big [\big [\epsilon_{x+1},\ e_x\big ]\big ] =-h_{x+1}e_x, ~~\big [\big [\epsilon_{x+1},\ f_x\big ]\big ] =h_{x+1}f_x, ~~x+1 \in X_k^+,\nonumber\\
&& \big [f_x,\ e_x \big ] =  \epsilon_x h_{x+1} - \epsilon_{x+1} h_x, ~~x \in [n-1], \nonumber \\ &&
\big[f_x,\ e_y \big ] =0, ~y\neq x \in [k]~\mbox{and }~ y \neq x,x\pm 1, ~\mbox{if}~ x, x+1 \in X_k^+,\nonumber\\
&& \big[ \big [f_x, e_{x+1} \big ] \big ] = 0, ~~x+1\in X_k^{+},~~~  \big[ \big [f_x, e_{x-1} \big ] \big ]  =0, ~~x \in X_k^+, \nonumber 
\end{eqnarray}
where $ [[\ ,\  ]]: \mathfrak{gl}_{k,m} \times \mathfrak{gl}_{k,m}\to \mathfrak{gl}_{k,m},$ such that $[[{\mathrm a},\ {\mathrm b} ]] = {\mathrm a}{\mathrm b} + {\mathrm b}{\mathrm a},$ ${\mathrm a}, {\mathrm b} \in \mathfrak{gl}_{k,m},$ and
recall $h_z \in \mathfrak{gl}_{k,m}$ are invertible elements, such that $h_z =1,$ if $z \in [k]$ and $h_z \neq 1$ if $z \in X_k^+.$ 

Also, $e_k^2 =f_k^2 =0,$  and for all $x \in [n],$ and  $\xi_k \in \big \{e_k,\ f_k \big\},$
\begin{equation}
\xi_{x\pm 1}\ \xi_x^2 + \xi_x^2\ \xi_{x\pm 1} - 2 \xi_x\ \xi_{x\pm1}\ \xi_x =0,~~x\neq k,\label{cubic}
\end{equation}
and
\begin{equation}
\big [\big [\  [\xi_{k+1}, \xi_{k} ],\ [ \xi_{k}, \xi_{k-1}]\  \big]\big ] =0.
\label{quar}
\end{equation}
\end{cor}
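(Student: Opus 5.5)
The plan is to obtain every relation by specializing the quadratic relations of Corollary \ref{remg} to the Chevalley-type generators $e_x=L^{(1)}_{x,x+1}$, $f_x=L^{(1)}_{x+1,x}$, $\epsilon_x=L^{(1)}_{x,x}$ of Remark \ref{gl}. The cubic and quartic Serre-type identities will then follow by combining these quadratic relations with the iteration rule $[L^{(1)}_{x,y},L^{(1)}_{y,z}]=-h_yL^{(1)}_{x,z}$.

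\textbf{Quadratic relations.} I will go through the index cases of the third relation in Corollary \ref{remg}, which I write as
\[
[L_{x,y},L_{z,w}]-2\delta_{x,z}L_{x,y}L_{x,w}\big|_{x\in X_k^+}+2\delta_{y,w}L_{z,y}L_{x,y}\big|_{y\in X_k^+}=\delta_{x,w}L_{z,y}h_x-\delta_{y,z}h_yL_{x,w}.
\]
\begin{enumerate}
\item The relations with $h$ come from the second relation of Corollary \ref{remg}. For $z\neq x,x+1$ the right side vanishes. For $x,x+1\in X_k^+$ and $z=x$, the relation gives $e_xh_x-h_xe_x=2e_xh_x$, i.e.\ $[[e_x,h_x]]=0$. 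For $\epsilon_z$ the two deltas cancel, so $[\epsilon_z,h_x]=0$.
\item For $[\epsilon_x,e_x]$, take $(x,y,z,w)=(x,x,x,x+1)$. Only the $\delta_{x,z}$ term and the $\delta_{y,z}$ term survive. If $x\in[k]$ this gives $[\epsilon_x,e_x]=-e_x$. If $x\in X_k^+$, the extra term $-2\epsilon_xe_x$ converts the commutator into $-[[\epsilon_x,e_x]]$, giving $[[\epsilon_x,e_x]]=h_xe_x$.
\item The remaining $\epsilon$, $e$, $f$ relations, including $[f_x,e_x]=\epsilon_xh_{x+1}-\epsilon_{x+1}h_x$, follow from the same bookkeeping. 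For $[f_x,e_x]$ take $(x+1,x,x,x+1)$; no same-row or same-column correction appears since $x\neq x+1$.
\item The relations $[f_x,e_y]=0$ and the anticommutators $[[f_x,e_{x\pm1}]]=0$ come from $(x+1,x,y,y+1)$. The anticommutator arises exactly when the rows or columns coincide, which happens for $y=x\pm1$, and lies in $X_k^+$.
\item Nilpotency $e_k^2=f_k^2=0$ is the special case already noted in Corollary \ref{remg}, since $k\in[k]$ and $k+1\in X_k^+$.
\end{enumerate}

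\textbf{Cubic relation (\ref{cubic}).} Take $\xi=e$ and $x<k$ first. Set $u=[e_x,e_{x+1}]=-h_{x+1}L_{x,x+2}$. Next compute $[e_x,L_{x,x+2}]$ from the quadratic relation with $x=z$ and $x\in[k]$, so no correction term appears and the bracket vanishes. Since $h_{x+1}$ commutes with $e_x$ (by the relations above, or because $h=1$ on $[k]$), we get $[e_x,[e_x,e_{x+1}]]=0$. This is exactly (\ref{cubic}). For $x,x\pm1\in X_k^+$ the same computation produces the correction $-2L_{x,x+1}L_{x,x+2}$, so the bracket becomes an anticommutator-type identity. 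The factors $h$ must then be moved through $e_x$ using $e_xh_x=-h_xe_x$-type rules. I expect the signs to recombine into $e_x^2e_{x+1}+e_{x+1}e_x^2-2e_xe_{x+1}e_x=0$. The $f$ case is symmetric, obtained via $(x,y)\mapsto(y,x)$.

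\textbf{Quartic relation (\ref{quar}).} Here $e_k$ is odd-like. I write $[e_{k+1},e_k]=h_{k+1}L_{k,k+2}$ and $[e_k,e_{k-1}]=-h_kL_{k-1,k+1}=-L_{k-1,k+1}$. The task then reduces to showing $[[h_{k+1}L_{k,k+2},L_{k-1,k+1}]]=0$. Using the quadratic relation for $(k,k+2,k-1,k+1)$, neither delta applies, and in particular $\delta_{y,w}=0$ because $k+2\neq k+1$. Hence $L_{k,k+2}$ and $L_{k-1,k+1}$ commute. The anticommutator with the $h_{k+1}$ prefactor must then vanish because $h_{k+1}$ anticommutes past $L_{k-1,k+1}$ in the appropriate sense: the second relation of Corollary \ref{remg} gives $L_{k-1,k+1}h_{k+1}=-h_{k+1}L_{k-1,k+1}$.

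\textbf{Main obstacle.} The cubic relation when both indices lie in $X_k^+$ is where I expect the difficulty. Tracking the noncommuting invertible $h$'s, each with its sign-flip rule, is delicate, and I would carry out that computation explicitly first.
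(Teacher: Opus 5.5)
Your proposal is correct and follows essentially the paper's route. You specialize the quadratic relations of Corollary \ref{remg}, then get the cubic relations from $[L^{(1)}_{x,y},L^{(1)}_{y,z}]=-h_yL^{(1)}_{x,z}$ together with same-row/same-column (anti)commutation, and the quartic one from $[L^{(1)}_{k,k+2},L^{(1)}_{k-1,k+1}]=0$. You are in fact more explicit than the paper about the sign flip $L^{(1)}_{k-1,k+1}h_{k+1}=-h_{k+1}L^{(1)}_{k-1,k+1}$, which is what turns that vanishing commutator into the stated anticommutator.

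The $X_k^+$ cubic case you deferred closes as you anticipated: $e_xh_{x+1}=-h_{x+1}e_x$ gives $[e_x,[e_x,e_{x+1}]]=h_{x+1}[[e_x,L^{(1)}_{x,x+2}]]=0$. The analogous identity $[e_{k+1},[e_{k+1},e_k]]=-h_{k+1}[[e_{k+1},L^{(1)}_{k,k+2}]]=0$ covers $x=k+1$ in the downward direction, which your condition $x,x\pm1\in X_k^+$ would have excluded. Two slips are harmless: $[e_k,e_{k-1}]$ equals $+L^{(1)}_{k-1,k+1}$, not $-L^{(1)}_{k-1,k+1}$, and for $[\epsilon_z,h_z]$ the deltas do not cancel but give $[\epsilon_z,h_z]=2[\epsilon_z,h_z]$, which still forces zero.
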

\begin{proof}
All the exchange quadratic relations among the generators follow immediately by Corollary \ref{remg} and Remark \ref{gl} part (2). The cubic relations (\ref{cubic}) in particular are derived by employing:\\
$\big [L^{(1)}_{x,y},\ L^{(1)}_{y,z}] = -h_yL^{(1)}_{x,z}$ for $x<y<z$ (or $x>y>z$)\\
$\big [L^{(1)} _{x,y},\ L^{(1)} _{x,z}\big ] =\big [L ^{(1)}_{y,x},\ L^{(1)}_{z,x}\big ]=0,$  if $x \in [k]$ and $\big [\big [L^{(1)}_{x,y},\ L^{(1)}_{x,z}\big ]\big ] =\big [ \big [L^{(1)} _{y,x},\ L^{(1)}_{z,x}\big ] \big ] =0,$ if $x \in X_k^+,$ for all $x \neq y \neq z \in [n]$.  The quartic relation (\ref{quar}) for $x =k$ emerges from $ \big[ L^{(1)}_{k+2, k},\ L^{(1)}_{k+1, k-1} \big ] =0 = \big [L^{(1)}_{k,k+2},\ L^{(1)}_{k-1, k+1}\big]$ and the second part of Remark \ref{gl}.
\end{proof}
\begin{exa} \label{gl11}
We consider the simple example of the algebra $\mathfrak{gl}_{1,1},$ generated by elements $h := h_2,\ e :=e_1,\ f :=f_1,$ $\epsilon_1,\ \epsilon_2$ and relations, 
\begin{eqnarray}
&& \big [ \epsilon_1,\ \epsilon_2\big ] = \big [h, \epsilon_j \big ] =0, ~~ j\in [2], ~~~~ \big [\epsilon_1,\ e \big ] = -e, ~~~~\big [\epsilon_1,\ f \big ] =f \nonumber\\
&& \big [\big [\epsilon_2,\ e \big ] \big ] =eh, ~~~~ \big [\big [ \epsilon_2, f   \big ] \big ]  = -hf, ~~~~ \big [\big[f,\ h \big]\big ] =0 , ~~~~ \big [\big[ e,\ h\big ]\big ] =0 \nonumber \\  && \big [f,\ e \big ] =  \epsilon_1h - \epsilon_2,~~~~e^2 = f^2 =0. \nonumber
\end{eqnarray}
\end{exa}
In the following remark we consider a special case of the algebra $\mathfrak{gl}_{k,m}$ and explore the connection and subtle structural differences to the standard \(\mathfrak{gl}(k\vert{}m)\) superalgebra \cite{Kac, Zhang}.
\begin{rem} \label{wow}
If $h_x^2 =1$ for all $x \in [n],$ and we set $\hat e_x := h_x e_x,$ $\hat f_x : = h_{x+1}f_x,$  $x\in [n-1]$ ($h_x =1$, if $x \in [k]$) and $\hat \epsilon_{x} = \epsilon_x$ if $x \in [k],$ $\hat \epsilon_x = - h_x \epsilon_x$ if $x \in X_k^+,$ then the following algebraic relations are deduced for the elements $h_x, \hat \epsilon_x,$ $x\in [n],$ and  $\hat f_x, \hat e_x,$ $x\in [n-1]$ (we only write the non-zero commutation relations):
\begin{eqnarray}
&& \big [\big [\hat \xi_x,\ h_x\big ]\big ]  =\big [\big [\hat \xi_x,\ h_{x+1}\big ]\big ]  =0, ~\xi_x \in \big \{ \hat e_x,\ \hat f_x,\big \}, ~~ x,\ x+1 \in X_k^+ \label{a1} \nonumber
\end{eqnarray}
\begin{eqnarray}
&& \big [\hat \epsilon_x,\ \hat e_x \big ] = -\hat e_x, ~~\big[\hat \epsilon_x,\ \hat f_x \big ] = \hat f_x,  ~~x\in [n-1],\label{main1}\\
&&\big [\hat \epsilon_{x+1},\ \hat e_x\big ] =\hat e_x, ~\big [\hat \epsilon_{x+1},\ \hat f_x\big ] =-\hat f_x, ~~x \in [n-1], \label{main3}\\
&& \big [\hat f_{x},\ \hat e_x\big ] = \hat \epsilon_x - \hat \epsilon_{x+1}, ~~ k\neq x\in [n], ~~~\big [\big [\hat f_{k},\ \hat e_k\big ] \big ]= \hat \epsilon_k + \hat \epsilon_{k+1}. \label{main4}
\end{eqnarray}
and
\begin{eqnarray}
&& \big[ \big [\hat f_x, \hat e_{x+1} \big ] \big ] = 0, ~~x+1\in X_k^{+},~~~  \big[ \big [\hat f_x, \hat e_{x-1} \big ] \big ]  =0, ~~x \in X_k^+, \label{extra}
\end{eqnarray}
Also, $\hat e_k^2 =\hat f_k^2 =0,$ and for all $k\neq x \in [n],$ $\hat \xi_x \in \big \{\hat e_x,\ \hat f_x\big \}$ the cubic relations become,
\begin{eqnarray}
&& \hat \xi_{x\pm 1}\ \hat \xi_x^2 + \hat \xi_x^2\ \hat \xi_{x\pm 1} - 2 c_x \hat \xi_x\ \hat \xi_{x\pm1}\ \hat \xi_x =0, ~~~~~
c_x =\Bigg  \{ \begin{matrix} & ~~~~~1, \quad \mbox{if}~~~x \in [k-1],  \\ 
 & -1, \quad \mbox{if}~~~x\in X_k^+. \end{matrix} \nonumber
\label{cubic2}\end{eqnarray}
We observe that the main quadratic relations among the elements 
\(\hat {\epsilon}_x\), 
\(\hat {e}_{x}\), and \(\hat {f}_{x}\) (\ref{main1})-(\ref{main4})
are identical to those of the general linear Lie superalgebra \(\mathfrak{gl}(k\vert{}m)\), however extra quadratic anti-commutation relations exist (\ref{extra}). The cubic relations are also slightly modified relative to \(\mathfrak{gl}(k\vert{}m)\); specifically, \(c_x = -1\) if \(x \in X_k^+.\) Analogous modifications occur in the quartic relations (\ref{quar}), which we omit here for brevity.

Note that for the algebra $\mathfrak{gl}_{1,1}$, the elements \(\hat{e}_{1}\), $\hat{f}_{1}$, \(\hat{\epsilon }_{1}\), 
and \(\hat{\epsilon }_{2}\) correspond precisely to the Chevalley–Serre generators of the standard algebra \(\mathfrak{gl}(1|1)\). We provide further discussion on this connection after establishing the Hopf algebra structure of \(Y(\mathfrak{gl}_{k,m})\) (see also Remark \ref{repr}, part (3)).
\end{rem}
\begin{rem} \label{remc}
Any quantum algebra ${\mathfrak A}$ is also equipped with a coproduct $\Delta^{(N)}: {\mathfrak A} \to {\mathfrak A}^{ \otimes N} $ \cite{FRT, Drinfeld}, defined as follows.
Define\footnote{We usually do not present the indices $2,3,\ldots, N+1$ in $T_{1;23\ldots N+1}$ and simply write $T_{1}.$},
\begin{equation}
{\mathrm T}_{1;23\ldots N+1}(\lambda): = (\id \otimes \Delta^{(N)}) L(\lambda) = L_{1N+1}(\lambda) \ldots L_{13}(\lambda) L_{12}(\lambda).\  
\end{equation}
${\mathrm T}$ satisfies relation (\ref{RTT}) (i.e. $\Delta^{(N)}$ is an algebra homomorphism; note $\Delta^{(2)} =: \Delta$) and is expressed as $(\id \otimes \Delta^{(N)}) L(\lambda) = \underset{x,y \in X}{\sum} e_{x,y} \otimes \Delta^{(N)}(L_{x,y}(\lambda)).$ Coassociativity also holds, i.e.\\ $(\Delta \otimes \id)\Delta^{(N-1)} =(\id \otimes \Delta)\Delta^{(N-1)}= \Delta^{(N)}.$
\end{rem}
The quantum algebra \(Y(\mathfrak{gl}_{k,m})\) naturally possesses a Hopf algebra structure (see the Proposition below). This follows directly from Remark \ref{remc}, which defines the coproduct. Moreover, as already mentioned, the Yangian $Y(\mathfrak{gl}_{k,m})\otimes Y( \mathfrak{gl}_{k,m})$ possesses the standard tensor product algebra structure in contrast to the known Yangian of the general linear Lie super-algebra \cite{Kac, Molev, Nazarov, Zhang}. 
\begin{pro} \label{hopf}
Let $Y(\mathfrak{gl}_{k,m})$ be the  unital, associative algebra derived in Theorem \ref{alg1}. Then $(Y(\mathfrak{gl}_{k,m}), \Delta, \epsilon, s)$ is a Hopf algebra with the following maps:
\begin{enumerate}
\item a coproduct $\Delta: Y(\mathfrak{gl}_{k,m}) \to Y(\mathfrak{gl}_{k,m}) \otimes Y(\mathfrak{gl}_{k,m}),$ such that\\ $\Delta(h_x) = h_x \otimes h_x,$ for all $x \in X_k^+$ and
$\Delta(L^{(p)}_{x,y}) = \underset{z\in [n]}{\sum}\  \underset{\underset{p_1, p_2 \in {\mathbb N}}{p_1+p_2=p}}{\sum} L_{z,y}^{(p_1)} \otimes L_{x,z}^{(p_2)},$  for all $\ x,y\in [n].$

\item a counit $\epsilon: Y(\mathfrak{gl}_{k,m})\to {\mathbb C},$ such that $\epsilon(h_x) =1,$ for all $x\in X_k^+$ and $\epsilon(L^{(p)}_{x,y}) =0,$ for all $x,y \in [n],$ $p \in {\mathbb N}^*.$

\item an antipode $s: Y(\mathfrak{gl}_{k,m}) \to Y(\mathfrak{gl}_{k,m}),$ such that $s(h_x)= h_x^{-1},$ for all $x \in X_k^+$ and\\ 
\begin{equation}\underset{z\in [n]}{\sum}\ \underset{\underset{p_1, p_2 \in {\mathbb N}}{p_1+p_2=p}}{\sum} s(L_{z,y}^{(p_1)}) L_{x,z}^{(p_2)} =\underset{z\in [n]}{\sum}\ \underset{\underset{p_1, p_2 \in {\mathbb N}}{p_1+p_2=p}}{\sum} L_{z,y}^{(p_1)} s(L^{(p_2)}_{x,z}) =0.
\label{anti}\end{equation}
\end{enumerate}
\end{pro}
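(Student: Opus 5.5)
I will follow the standard FRT route, using the matrix form of Remark \ref{remc} throughout and checking everything on generating series rather than on individual generators. Set $\Delta(L(\lambda)) := L_{13}(\lambda)L_{12}(\lambda)$. In components, $(L_{13}L_{12})_{x,y} = \sum_z L_{z,y}(\lambda)\otimes L_{x,z}(\lambda)$ with the ungraded tensor product, because in $L_{13}L_{12}$ the second tensor slot carries the right factor. Expanding in $\lambda^{-1}$ gives exactly the formula for $\Delta(L^{(p)}_{x,y})$ in part (1). At $p=0$ this reads $\Delta(L^{(0)}_{x,y})=\delta_{x,y}h_x\otimes h_x$, which forces $\Delta(h_x)=h_x\otimes h_x$ for $x\in X_k^+$, and is consistent with $h_x=1$ for $x\in[k]$.

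\textbf{Homomorphism property.} I must show that $\Delta$ respects all defining relations (\ref{funda2}), including those involving the $h_z$. Since these relations are by Theorem \ref{alg1} precisely the coefficient content of (\ref{RTT}), it suffices to verify (\ref{RTT}) for ${\mathrm T}=L_{13}L_{12}$. This is the usual computation. First move $L_{13}$ past $R_{12}$ using the RTT relation in the copy $(1,2,3)$: $R_{12}L_{13}(\lambda_1)L_{23}(\lambda_2)=L_{23}(\lambda_2)L_{13}(\lambda_1)R_{12}$. Then do the same in the copy $(1,2,2')$, using that factors acting in different quantum spaces commute (this is where the ungraded product is essential). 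One must also check that no extra relations beyond (\ref{RTT}) were imposed; the commutation relations of the $h$'s are contained in the $\lambda^0$ coefficients, so the above suffices. Coassociativity follows immediately from $L_{14}L_{13}L_{12}$ being obtained either way.

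\textbf{Counit.} Define $\epsilon(L(\lambda))=\id_n$, i.e. $\epsilon(L^{(0)}_{x,y})=\delta_{x,y}$ and $\epsilon(L^{(p)})=0$ for $p\geq1$. This respects (\ref{RTT}) because $R_{12}=R_{12}$ trivially, and the counit axioms follow from $(\id\otimes\epsilon\otimes\id)(L_{13}L_{12})=L$.

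\textbf{Antipode.} Define $s$ by $s(L(\lambda))=L(\lambda)^{-1}$. The inverse exists as a formal power series because $L^{(0)}=\mathrm{diag}(h_x)$ is invertible, which is the reason $h_x$ is assumed invertible. It is computed recursively from (\ref{anti}). At order $0$ this gives $s(h_x)=h_x^{-1}$, and at order $p$ one solves for $s(L^{(p)})$ in terms of lower orders, multiplied on one side by $h^{-1}$. Two sided-ness holds since left and right inverses of an invertible series coincide. To see that $s$ is an algebra anti-homomorphism, invert (\ref{RTT}), which gives $L_2^{-1}L_1^{-1}R_{12}=R_{12}L_1^{-1}L_2^{-1}$; this is exactly the relation required of $s(L)$ in the opposite algebra.

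\textbf{Main obstacle.} I expect the main obstacle to be bookkeeping of index conventions: the matrix product order $L_{13}L_{12}$ versus the component formula $L_{z,y}\otimes L_{x,z}$, which looks transposed. I will recheck it directly on the $\lambda^0$ relation $[L^{(p)}_{x,y},h_z]$, since that relation is sensitive to this order.
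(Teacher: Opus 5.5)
Your coproduct argument (the RTT check for $L_{13}L_{12}$, coassociativity) and your counit argument are the standard FRT reasoning that the paper invokes through Remark \ref{remc}, and they are fine. The gap is the antipode.

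\textbf{Why $s(L)=L^{-1}$ fails.} With this coproduct,
\[
m(s\otimes\id)\Delta(L_{x,y})=\sum_{z}s(L_{z,y})L_{x,z},\qquad
m(\id\otimes s)\Delta(L_{x,y})=\sum_z L_{z,y}\,s(L_{x,z}).
\]
In both expressions the factors appear in the reverse order compared with the entries of $LL^{-1}$ and $L^{-1}L$. So (\ref{anti}) says that $s(L)$ is the inverse of $L$ in $\EEnd(\mathbb{C}^n)\otimes Y^{\mathrm{op}}[[\lambda^{-1}]]$. It is not the ordinary inverse $L(\lambda)^{-1}$ in $\EEnd(\mathbb{C}^n)\otimes Y[[\lambda^{-1}]]$; that one is the antipode for the other coproduct $L\mapsto L_{12}L_{13}$.

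The two inverses genuinely differ:
\begin{itemize}
\item At order $\lambda^{-1}$, $L^{-1}$ gives $-h_x^{-1}L^{(1)}_{x,y}h_y^{-1}$, while (\ref{anti}) forces $-h_y^{-1}L^{(1)}_{x,y}h_x^{-1}$.
\item Take $x\in X_k^+$ and $y\in[k]$. The relation of Theorem \ref{alg1} with $z=x$ gives $h_xL^{(1)}_{x,y}=-L^{(1)}_{x,y}h_x$. Your candidate then yields $m(s\otimes\id)\Delta(L^{(1)}_{x,y})=-h_x^{-1}L^{(1)}_{x,y}h_x+L^{(1)}_{x,y}=2L^{(1)}_{x,y}\neq 0$. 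It is nonzero because it maps to $2e_{y,x}$ under the fundamental representation (\ref{fundrep}).
\item This is not only a super-sign effect. Already for $\alpha=0$ the two inverses differ at order $\lambda^{-2}$ by $\sum_z[L^{(1)}_{x,z},L^{(1)}_{z,y}]$, which equals $-nL^{(1)}_{x,y}$ for $x\neq y$.
\end{itemize}
Your planned check at order $\lambda^0$ cannot detect any of this, since both candidates give $h_x^{-1}$ there. Your sentence that $L^{-1}$ ``is computed recursively from (\ref{anti})'' is therefore false as stated.

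\textbf{How to repair it.} Define $S:=s(L(\lambda))$ as the two-sided inverse of $L(\lambda)$ in $\EEnd(\mathbb{C}^n)\otimes Y^{\mathrm{op}}[[\lambda^{-1}]]$, i.e.\ as the recursive solution of (\ref{anti}). This is in effect what the paper does when it says $s$ is ``uniquely identified'' by the antipode axioms. The inverse exists because $L^{(0)}$ is diagonal with invertible entries. Left and right inverses coincide in that ring, which gives both antipode identities on generators.

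For the anti-homomorphism property, applying $s$ entrywise to (\ref{RTT}) with products reversed shows that you must check $R_{12}S_2S_1=S_1S_2R_{12}$ with ordinary products. Proceed as follows:
\begin{itemize}
\item The ordinary product $L_1L_2$ equals $L_2\cdot_{\mathrm{op}}L_1$, so (\ref{RTT}) reads $R_{12}(L_2\cdot_{\mathrm{op}}L_1)=(L_1\cdot_{\mathrm{op}}L_2)R_{12}$ in the op-ring.
\item Multiply on the left by $S_2\cdot_{\mathrm{op}}S_1$ and on the right by $S_1\cdot_{\mathrm{op}}S_2$. This gives $(S_2\cdot_{\mathrm{op}}S_1)R_{12}=R_{12}(S_1\cdot_{\mathrm{op}}S_2)$.
\item In ordinary products this is exactly $S_1S_2R_{12}=R_{12}S_2S_1$.
\end{itemize}
This sandwich argument also removes a slip in your display. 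Literally inverting (\ref{RTT}) produces $R_{12}^{-1}$, not $R_{12}$; your form only follows after using $R_{21}=R_{12}$ and $R(\lambda)^{-1}\propto R(-\lambda)$.

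With $s$ defined this way, the rest of your argument goes through: the extension from generators to all of $Y$ via the anti-homomorphism property, and compatibility with $h_x=1$ for $x\in[k]$.
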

\begin{proof}

$ $

\begin{enumerate}
\item The coproduct is given in Remark \ref{remc}: $(\id \otimes \Delta)L(\lambda) =L_{13}(\lambda) L_{12}(\lambda),$ and recall $L(\lambda) = \underset{p\in {\mathbb N}}{\sum} \lambda^{-p}L^{(p)},$ $L^{(p)} = e_{x,y} \otimes L^{(p)}_{x,y},$ $p>0$ and $L_{x,y}^{(0)} = \delta_{x,y} h_x$
then 
\begin{equation}
(\id \otimes \Delta)L^{(p)} = \underset{\underset{p_1+p_2 = p}{p_1, p_2 \in {\mathbb N}}}{\sum}L^{(p_1)}_{13} L^{(p_2)}_{12}\ \Rightarrow\  \Delta(L^{(p)}_{x,y}) =\underset{\underset{p_1+p_2 = p}{p_1, p_2 \in {\mathbb N}}}{\sum}\ \sum_{z\in [n]} L_{z,y}^{(p_1)} \otimes L_{x,z}^{(p_2)}.\nonumber
\end{equation}
For instance, the coproducts of the first few order terms read as follows for all $x,y \in [n],$\\
$\Delta(h_x) = h_x \otimes h_x, ~~x\in X_k^{+},$\\
$\Delta(L^{(1)}_{x,y}) = L^{(1)}_{x,y} \otimes h_x + h_y\otimes L^{(1)}_{x,y},$\\
$\Delta(L^{(2)}_{x,y}) = L^{(2)}_{x,y}  \otimes h_x + h_y\otimes L^{(2)}_{x,y} +\underset{z\in [n]}{\sum} L^{(1)}_{z,y}\otimes L^{(1)}_{x,z},$\\
and so on.

\item  The counit is uniquely identified via, $(\epsilon \otimes \id )\Delta(a) = (\id \otimes \epsilon)\Delta(a) = a,$ for all $a \in Y(\mathfrak{gl}_{k,m}),$
which lead to $\epsilon(h_x) =1,$ $x\in X_k^+$ (group elements) and $\epsilon(L^{(p)}_{x,y}) =0,$ $p \in {\mathbb N}^*.$

\item  The antipode is uniquely identified by, $m(s\otimes \id)\Delta(a)  = m(\id \otimes s)\Delta(a) = \epsilon(a) 1_{Y}$ ($m(a\otimes b) = ab,$ $a,b\in Y(\mathfrak{gl}_{k,m}),$ $1_Y$ is the unit element in the Yangian), which lead to (\ref{anti}).

The antipode for all the elements of the algebra is identified by (\ref{anti}) by iteration. For instance, for all $x, y \in [n],$\\
$s(h_x)=h_{x^{}}^{-1},$\\
$s(L^{(1)}_{x,y}) = - h_y^{-1}L^{(1)}_{x,y}h_x^{-1} $\\
$s(L^{(2)}_{x,y}) = - h_y^{-1}L^{(2)}_{x,y}h_x^{-1} +\underset{{z\in[n]}}{\sum}h_y^{-1}L^{(1)}_{z,y} h_z^{-1} L^{(1)}_{x,z}h_x^{-1},$\\
and so on for higher order terms.\qedhere
\end{enumerate}
\end{proof}
As emphasized in the Introduction and prior to Proposition \ref{hopf}, the Yangian \(Y(\mathfrak{gl}_{k,m})\) is viewed as a Hopf algebra equipped with the standard, ungraded tensor product structure. This ungraded structure requires deforming the coproduct as demonstrated in Proposition \ref{hopf}, alongside the inclusion of a family of commutative group elements \(h_{x}\), \(x\in [n]\) within the algebra. Both the coproduct deformation and the inclusion of the group elements are naturally induced via FRT formulation. Consequently, this framework absorbs the super-type symmetries entirely into the deformed coproduct while preserving a completely ungraded tensor product algebra structure (see also relevant Remark \ref{wow}).
\begin{rem} \label{repr}

$ $

\begin{enumerate}

\item The map
$\pi: \mathfrak{gl}_{k,m} \to \EEnd({\mathbb C}^n),$ such that for all $x,y \in [n],$
\begin{equation}
L^{(1)}_{x,y}\mapsto e_{y,x},~~~h_x \mapsto \id_n, ~~ \mbox{if} ~~~x\in [k], ~~~ h_x \mapsto d_x := \id_n -2 e_{x,x}, ~~\mbox{if} ~~x\in X_k^+ \label{fundrep}
\end{equation}
is an algebra representation (the fundamental representation). 

 \item Consider $L(\lambda) = L^{(0)} + \lambda^{-1}L^{(1)},$ where $L^{(0)} = \underset{x \in [n]}{\sum}e_{x,x}\otimes h_x,$ 
 $L^{(1)} = \underset{x,y \in [n]}{\sum} e_{x,y} \otimes L^{(1)}_{x,y}$ and $h_x, L^{(1)}_{x,y} \in \mathfrak{gl}_{k,m}$, 
 recall also the fundamental representation given in (\ref{fundrep}), then $~(\id \otimes \pi )L^{(0)} = {\mathrm D}$ and $~(\id \otimes \pi) L^{(1)} = {\mathrm P}, $
where recall  ${\mathrm D}= \underset{x\in [n]}{\sum} e_{x,x } \otimes d_x =  \underset{x\in [n]}{\sum} d_{x} \otimes e_{x,x}$ and ${\mathrm P}= \underset{x,y \in [n]}{\sum} e_{x,y} \otimes e_{y,x},$
i.e. $(\id \otimes \pi )L(\lambda) = R(\lambda).$

\item From Remark \ref{wow} ($h_x^2 =1$) and Proposition \ref{hopf} the coproducts for $\hat e_x, \hat f_x,$ $x\in [n-1]$ and $ \hat \epsilon_x,$ $x \in [n]$ are derived (recall $h_x =1$ if $x \in [k],$ and $\Delta(h_x) = h_x \otimes h_x,$ $x \in X_k^+$):
\begin{eqnarray}
&& \Delta(\hat \xi_x) = \hat \xi_x \otimes 1 + h_{x} h_{x+1} \otimes \hat  \xi_x, ~~~\hat \xi_x \in \big\{\hat e_x, \hat f_x\big \}\nonumber\\
&&
\Delta(\hat \epsilon_x) = \hat \epsilon_x \otimes 1 + 1 \otimes \hat \epsilon_x. \nonumber
\end{eqnarray}
\end{enumerate}
\end{rem}
In the following proposition we derive central elements of $\mathfrak{gl}_{k,m}.$ We first introduce the notion of a trace 
for elements $A = \underset{x,y \in [n] }{\sum} e_{x,y} \otimes A_{x,y}\in \EEnd({\mathbb C}^n) \otimes {\mathfrak A},$ where ${\mathfrak A}$ is some quantum algebra as defined in Definition \ref{frt1}. Then  we define the trace of $A$, $tr(A) := \underset{x\in [n]}{\sum} A_{x,x} \in {\mathfrak A}.$

\begin{pro} \label{symm} Let $L = L^{(0)} + \lambda^{-1} L^{(1)}\in \EEnd({\mathbb C}^n) \otimes \mathfrak{gl}_{k,m},$ where $L^{(0)} = \underset{x \in [n]}{\sum} e_{x,x} \otimes h_x$ and $L^{(1)} = \underset{x,y \in [n]}{\sum} e_{x,y} \otimes L^{(1)}_{x,y}$ such that for all $x,y \in [n],$ $h_x, L^{(1)}_{x,y} \in \mathfrak{gl}_{k,m}$ ($n = k+m$). Let also $t(\lambda) : = L\big (\lambda) L^{-1}(-\lambda) \in \EEnd({\mathbb C}^n) \otimes \mathfrak{gl}_{k,m}$ (see also \cite{Sklyanin}) and $\tau(\lambda) := tr \big ( d\ t(\lambda) \big ) \in \mathfrak{gl}_{k,m},$ where $d = \underset{x\in [n]}{\sum} \theta_xe_{x,x}$ and $\theta_x =\Big  \{ \begin{matrix} & 1, ~~if~~x \in [k]\\ 
& -1, ~~if ~~x \in X_k^+ \end{matrix}.$ Then,
 $\big [\tau(\lambda),\  \mathfrak{g} \big ] =0,$ $\mathfrak{g}\in \mathfrak{gl}_{k,m}.$
\end{pro}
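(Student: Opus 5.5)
Plan: work in the RTT framework. Embed $\mathfrak{g}\in\mathfrak{gl}_{k,m}$ through an auxiliary copy. Since $\mathfrak{gl}_{k,m}$ is generated by $h_z$ and $L^{(1)}_{z,w}$, it suffices to show that $\tau(\lambda)$ commutes with every entry of $L_2(\mu)=L^{(0)}_2+\mu^{-1}L^{(1)}_2$ for generic $\mu$. Equivalently, we show $[L_2(\mu),\tau(\lambda)]=0$ in $\EEnd({\mathbb C}^n)\otimes\mathfrak{gl}_{k,m}$ and then expand in $\mu^{-1}$. The $\mu^0$ coefficient gives commutation with the $h_z$, and the $\mu^{-1}$ coefficient gives commutation with the $L^{(1)}_{z,w}$.

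The key steps, in order, are as follows.

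First, by Remark~\ref{gl}(1) and Theorem~\ref{alg1}, the truncated $L(\lambda)=L^{(0)}+\lambda^{-1}L^{(1)}$ satisfies the RTT relation (\ref{RTT}) with $R(\lambda_1-\lambda_2)=(\lambda_1-\lambda_2)r+{\mathrm P}$. Inverting, we get $L_1^{-1}(-\lambda)R_{12}(-\lambda-\mu)L_2(\mu)=L_2(\mu)R_{12}(-\lambda-\mu)L_1^{-1}(-\lambda)$, using unitarity $R_{12}(\nu)R_{21}(-\nu)\propto\id$. Here $L^{-1}(-\lambda)$ exists as a formal power series, because $L^{(0)}$ is invertible ($h_x$ invertible).

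Second, we use the reflection-type (Sklyanin) argument. Combining the two exchange relations gives
\[
R_{12}(\lambda-\mu)\,t_1(\lambda)\,R_{21}(\lambda+\mu)\,L_2(\mu)=L_2(\mu)\,R_{12}(\lambda-\mu)\,t_1(\lambda)\,R_{21}(\lambda+\mu),
\]
up to appropriate normalization. Multiplying by $R_{12}(\lambda-\mu)^{-1}$ and moving $L_2(\mu)$ through yields an exchange relation between $t_1(\lambda)$ and $L_2(\mu)$.

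Third, we take the trace in space~1 weighted by $d_1$. For this step we need the identity that the $R$-matrices sandwiching $t_1$ can be removed under $tr_1(d_1\,\cdot)$. Concretely, we verify $[d\otimes d,\check r]=0$, which is immediate since ${\mathrm D}$ and ${\mathrm P}$ commute with $d\otimes d$. We also need a crossing/transposition property: the partial transpose $r^{t_1}$ together with $d$ makes the contributions $\lambda\, tr_1(d_1 r_{12} t_1 r_{21})$ and $tr_1(d_1 {\mathrm P}_{12} t_1 {\mathrm P}_{12})=tr(d\,t)\cdot\id$ reduce to scalar multiples of $\tau(\lambda)\otimes \id$. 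In particular, since $r=\id-2\sum_{x\in X_k^+}e_{x,x}\otimes e_{x,x}$ is diagonal, $tr_1(d_1 r_{12}t_1 r_{21})=\sum_x\theta_x t_{x,x}\,(r_{x})^2$ with $(r_x)^2=\id$. This makes the weight $d$ precisely the supertrace that absorbs the signs.

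The main obstacle is this third step. The cross terms $r\,t\,{\mathrm P}$ and ${\mathrm P}\,t\,r$ do not trivially collapse. For them I would compute $tr_1(d_1 {\mathrm P}_{12}A_1)=d_2A_2$ and $tr_1(d_1 A_1{\mathrm P}_{12})=A_2d_2$. I would then show that these terms cancel between the two sides of the exchange relation, using $L^{(0)}$ diagonality and the $h$-commutation relations of Corollary~\ref{remg}, which encode exactly the $d_x$-twist. If that cancellation fails in general, the fallback is a direct check on generators. One expands $t(\lambda)=L(\lambda)L^{-1}(-\lambda)$ to low orders in $\lambda^{-1}$ and verifies commutation of $\sum_x\theta_x t_{x,x}$ with $h_z$ and with $e_x,f_x,\epsilon_x$ via Corollaries~\ref{remg} and \ref{serre}.
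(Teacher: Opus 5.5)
\textbf{There is a genuine gap in your second step.} RTT together with its inverted form gives
$L_1^{-1}(-\lambda)R_{21}(\lambda+\mu)L_2(\mu)=L_2(\mu)R_{21}(\lambda+\mu)L_1^{-1}(-\lambda)$. Note that what appears is $R_{12}^{-1}(-\lambda-\mu)\propto R_{21}(\lambda+\mu)$, not $R_{12}(-\lambda-\mu)$ as in your first step. Combining this with RTT yields only
\[
R_{12}(\lambda-\mu)\,t_1(\lambda)\,R_{21}(\lambda+\mu)\,L_2(\mu)=L_2(\mu)\,L_1(\lambda)\,R_{12}(\lambda-\mu)R_{21}(\lambda+\mu)\,L_1^{-1}(-\lambda).
\]
Here the $R$-matrices are trapped between $L_1(\lambda)$ and $L_1^{-1}(-\lambda)$. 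They cannot be pulled back outside $t_1$.

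The relation you wrote is in fact false. Its $\mu^2$ coefficient says $[r_{12}t_1(\lambda)r_{12},L^{(0)}_2]=0$, which in particular requires $[h_z,t_{z,b}(\lambda)]=0$ for $z\in X_k^+$, $b\neq z$. At order $\lambda^{-1}$ this would mean $[h_z,L^{(1)}_{z,b}h_b^{-1}]=0$. By Corollary \ref{remg} that commutator equals $2h_zL^{(1)}_{z,b}h_b^{-1}$, which maps to $2e_{b,z}\neq0$ under $\pi$.

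Your third step also contains a false identity. Since ${\mathrm P}_{12}t_1{\mathrm P}_{12}=t_2$, you get $\mathrm{tr}_1(d_1{\mathrm P}_{12}t_1{\mathrm P}_{12})=(k-m)\,t_2(\lambda)$. This is a matrix in space 2, not $\tau(\lambda)\,\id$. The decisive cross-term cancellation is only announced, not shown. The fallback of checking a few orders in $\lambda^{-1}$ cannot prove an identity for the full series $\tau(\lambda)$.

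\textbf{How to repair your route.} You only need the $\mu^2$ and $\mu^1$ coefficients, because the entries of $L(\mu)$ are exactly the generators $h_z$ and $L^{(1)}_{x,y}$. Since $r^2=\id$ and $r{\mathrm P}={\mathrm P}r$, one has $R_{12}(\lambda-\mu)R_{21}(\lambda+\mu)=(\lambda^2+1-\mu^2)\id+2\lambda\check r$, with no $\mu^1$ term. The correct relation therefore gives, at orders $\mu^2$ and $\mu^1$:
\[
r_{12}t_1r_{12}L^{(0)}_2=L^{(0)}_2t_1,\qquad r_{12}t_1r_{12}L^{(1)}_2+(r_{12}t_1{\mathrm P}_{12}-{\mathrm P}_{12}t_1r_{12})L^{(0)}_2=L^{(1)}_2t_1.
\]
Write $r_{12}=\sum_z(d_z)_1(e_{z,z})_2$ and use $\theta_z(d_z)_{z,z}=1$. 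Then $\mathrm{tr}_1(d_1r_{12}t_1r_{12})=\tau(\lambda)\,\id$ and $\mathrm{tr}_1(d_1r_{12}t_1{\mathrm P}_{12})=\mathrm{tr}_1(d_1{\mathrm P}_{12}t_1r_{12})=t_2(\lambda)$. So the cross terms cancel on the same side, and $[\tau(\lambda),L^{(0)}_2]=[\tau(\lambda),L^{(1)}_2]=0$.

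This closely parallels the paper's argument. The paper extracts from RTT the intertwining $(\pi\otimes\id)\Delta^{(op)}(\mathfrak g)L(\lambda)=L(\lambda)(\pi\otimes\id)\Delta(\mathfrak g)$. It inverts this for $L^{-1}(-\lambda)$ to get $[(\pi\otimes\id)\Delta^{(op)}(\mathfrak g),t(\lambda)]=0$. It then takes $\theta$-weighted diagonal components, using that $h_y$ anticommutes with $t_{x,y}$ for $x\neq y$, $y\in X_k^+$.

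The idea your proposal lacks, in either version, is this: the symmetry must enter the auxiliary space only through $\mathbb{C}$-valued matrices, so that the $d$-weighted trace can actually be evaluated. In the paper these are $d_z$ and $e_{y,x}$; in the repaired route they are the leading $\mu$-order factors $r$ and ${\mathrm P}$.
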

\begin{proof} We first recall that $L(\lambda) = L^{(0)} + \lambda^{-1}L^{(1)}$ satisfies equation (\ref{RTT}) with $R(\lambda) = r + \lambda^{-1} {\mathrm P},$ where $r = {\mathrm D}$ is given in Proposition \ref{pro11} and ${\mathrm P}$ is the permutation map. Then from (\ref{RTT}) (see also Theorem \ref{alg1} and Remark \ref{repr}) we obtain
$
~(\pi \otimes \id)\Delta^{(op)}(\mathfrak{g}) L(\lambda) = L(\lambda)(\pi \otimes \id)\Delta(\mathfrak{g}), ~~~\mathfrak{g} \in \mathfrak{gl}_{k,m},$\\
where recall $\Delta ^{(op)} = \sigma \circ \Delta,$ $\sigma$ is the algebra flip map, $\sigma: Y(\mathfrak{gl}_{k,m}) \otimes Y(\mathfrak{gl}_{k,m}) \to Y(\mathfrak{gl}_{k,m}) \otimes Y(\mathfrak{gl}_{k,m}),$ $a \otimes b \mapsto b \otimes a$. We then deduce,
$~(\pi \otimes \id)\Delta(\mathfrak{g}) L^{-1}(-\lambda) = L^{-1}(-\lambda)(\pi \otimes \id)\Delta^{(op)}(\mathfrak{g}), ~~~\mathfrak{g} \in \mathfrak{gl}_{k,m}, $
and conclude that
\begin{equation}
(\pi \otimes \id)\Delta^{(op)}(\mathfrak{g}) t(\lambda) = t(\lambda)(\pi \otimes \id)\Delta^{(op)}(\mathfrak{g}). \label{dbasic} \end{equation}

We first examine the case $\mathfrak{g} \in \big \{h_z|\ z \in X_k^+\big \};$  from (\ref{dbasic}), 
Remark \ref{repr} and expressing $t(\lambda) = \underset{x, y\in [n]}{\sum e_{x,y} \otimes t_{x,y}(\lambda)}$ we deduce
\begin{eqnarray}
&& (d_z \otimes h_z) t(\lambda) = t(\lambda)( d_z \otimes h_z )\  \Rightarrow\ \big [ \big [h_z, t_{x,z}(\lambda) \big ]\big ] =\big [ \big [h_z, t_{z,x}(\lambda) \big ]\big ] = 0, \nonumber\\ && \big[ h_z,\ t_{z,z}(\lambda)\big ] = \big[ h_z,\ t_{x,y}(\lambda)\big ] =0 ~~\mbox{if} ~~z\neq x,y \in [n]. \label{equ1}
\end{eqnarray}
By equations (\ref{equ1}) we conclude, $ \big [ h_{z},\ \tau(\lambda) \big] =0,$ for all $z \in [n].$

If $\mathfrak{g} \in \big \{L^{(1)}_{x,y}|\ x,y \in [n]\big \},$  we deduce from (\ref{dbasic}) for all $x,y,z,w \in [n],$
\begin{eqnarray}
    && \big [L^{(1)}_{x,y},\ t_{z,z} \big ] =0, ~~~~z \neq x,y\nonumber\\
    && \big [L^{(1)}_{x,y},\ t_{y,y} \big ]= - h_yt_{x,y} \nonumber\\
    && \big [L^{(1)}_{x,y},\ t_{x,x} \big ] =  \theta_xt_{x,y}h_y. \label{equ2}
\end{eqnarray}
Then from (\ref{equ1}) and (\ref{equ2}) we conclude $\big [ L^{(1)}_{x,y},\ \tau(\lambda) \big] =0,$ for all $x,y \in [n].$
\end{proof}

\begin{exa}

The first two non-trivial elements of the expansion $\tau(\lambda) = \underset{n\geq 0}{\sum}\lambda^{-n}\tau^{(n)}$ are
\[ \tau^{(1)} = 2\sum_{x\in [n]}\theta_xL^{(1)}_{x,x}h_x^{-1} , ~~~~\tau^{(2)} = 2\sum_{x,y \in [n]} \theta_xL_{x,y}^{(1)} h_y^{-1} L_{y,x}^{(1)}h_x^{-1}.\]
The elements $\tau^{(1)},$  $\tau^{(2)}$ are the linear and quadratic Casimir elements of $\mathfrak{gl}_{k,m}$ respectively. By setting $\hat L_{x,y}: = h_x^{-1} L^{(1)}_{x,y}$ (recall Remark \ref{wow}), the Casimir elements become, $\tau^{(1)} = 2\underset{x \in [n]}{\sum} \theta_x \hat L_{x,x}$ and $ ~\tau^{(2)} = 2\underset{x,y \in [n]}{\sum} \theta_x \hat L_{x,y} \hat  L_{y,x}.$
\end{exa}

\section{Mutual centralizers and tensor representations}
\noindent The primary objective of this section is to investigate the symmetry of involutive tensor representations of the braid group, and to derive specific finite-dimensional irreducible representations of $\mathfrak{gl}_{k,m}$
with their associated bases.
\begin{pro} \label{symm0}
Let $\rho: B_N \to \EEnd(({\mathbb C}^n)^{\otimes N}),$ $\sigma_j \mapsto \check r_j,$ where
$\check r_j := \id^{\otimes (j-1)}  \otimes  \check r \otimes \id^{\otimes (N-j-1)}$ and
$ \check r \in \EEnd( {\mathbb C}^n\otimes{\mathbb C}^n)$  is the involutive solution of the braid equation given in Proposition \ref{pro11}. Let also 
$\pi: \mathfrak{gl}_{k,m} \to \EEnd({\mathbb C}^n)$ 
be the fundamental representation of $\mathfrak{gl}_{k,m}$ (\ref{fundrep}).
Then 
\begin{equation}
\big [\check r_j,\ \pi^{\otimes N}\Delta^{(N)}(\mathfrak{g}) \big ] = 0, 
~~\forall\ \mathfrak{g}  \in \mathfrak{gl}_{k,m}, ~~ j \in [N-1]
\end{equation}
\end{pro}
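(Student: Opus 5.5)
The proof reduces to a two-site check on generators. First reduce to $N=2$. By Remark \ref{remc}, $\Delta^{(N)}$ is an algebra homomorphism and is coassociative. Hence for each $j\in[N-1]$ we may write $\Delta^{(N)}=(\id^{\otimes (j-1)}\otimes\Delta\otimes\id^{\otimes(N-j-1)})\Delta^{(N-1)}$. Consequently every element $\pi^{\otimes N}\Delta^{(N)}(\mathfrak g)$ is a sum of terms of the form $A\otimes(\pi\otimes\pi)\Delta(b)\otimes C$, with the middle factor acting on sites $j,j+1$. Since $\check r_j$ acts as the identity outside sites $j,j+1$, it suffices to prove
\[
\big[\check r,\ (\pi\otimes\pi)\Delta(\mathfrak g)\big]=0 .
\]
Moreover, the commutant of $\check r$ is a subalgebra and $(\pi\otimes\pi)\Delta$ is an algebra map. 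So it is enough to check this on the generators $h_z$ ($z\in X_k^+$) and $L^{(1)}_{x,y}$ ($x,y\in[n]$) of $\mathfrak{gl}_{k,m}$ (Corollary \ref{remg}).

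For $h_z$, Proposition \ref{hopf} and (\ref{fundrep}) give $(\pi\otimes\pi)\Delta(h_z)=d_z\otimes d_z$. This commutes with ${\mathrm P}$, since ${\mathrm P}(A\otimes B){\mathrm P}=B\otimes A$. It also commutes with ${\mathrm D}$, because both are diagonal. Hence it commutes with $\check r={\mathrm D}{\mathrm P}$.

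For $L^{(1)}_{x,y}$, Proposition \ref{hopf} gives $\Delta(L^{(1)}_{x,y})=L^{(1)}_{x,y}\otimes h_x+h_y\otimes L^{(1)}_{x,y}$. Its image is
\[
G_{x,y}:=e_{y,x}\otimes d_x+d_y\otimes e_{y,x},
\]
with the convention $d_x=\id_n$ for $x\in[k]$. Using ${\mathrm P}G_{x,y}{\mathrm P}=d_x\otimes e_{y,x}+e_{y,x}\otimes d_y$, the condition $\check r G_{x,y}=G_{x,y}\check r$ becomes
\[
{\mathrm D}\,(d_x\otimes e_{y,x}+e_{y,x}\otimes d_y)=(e_{y,x}\otimes d_x+d_y\otimes e_{y,x})\,{\mathrm D}.
\]
I would verify this identity by evaluating both sides on the basis vectors $e_a\otimes e_b$, using $d_x e_a=\theta'_{x,a}e_a$ (with sign $-1$ exactly when $a=x\in X_k^+$). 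Then ${\mathrm D}(e_a\otimes e_b)=e_a\otimes e_b$ unless $a=b\in X_k^+$, in which case we get $-e_a\otimes e_a$.

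The main obstacle is the case analysis in this last identity. The cases are $x\in[k]$ or $x\in X_k^+$, similarly for $y$, whether $x=y$, and whether the input $(a,b)$ has $a=x$, $b=x$, or both. The nontrivial cases are those where the output of $e_{y,x}$ lands on a diagonal pair $e_y\otimes e_y$ with $y\in X_k^+$, or where $a=b=x\in X_k^+$. In each such case the sign produced by ${\mathrm D}$ on one side must be matched by a sign from $d_x$ or $d_y$ on the other. I expect these signs to cancel exactly: ${\mathrm D}$ and the $d$'s are both built from the same projections $e_{x,x}$, $x\in X_k^+$. Equivalently, one can package this check as the $\lambda$-independent coefficient of the RTT relation (\ref{RTT}) with $L$ evaluated via Remark \ref{repr}(2), i.e.\ $(\id\otimes\pi)L(\lambda)=R(\lambda)$. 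I would use that as a cross-check.
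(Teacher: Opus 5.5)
Your argument is correct, but it is organized differently from the paper's.

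\textbf{How the paper argues.} The paper does no matrix computation. It takes the order-by-order RTT relations
$\check r_{12}L^{(0)}_1L^{(0)}_2=L^{(0)}_1L^{(0)}_2\check r_{12}$,
$\check r_{12}L^{(0)}_1L^{(1)}_2=L^{(1)}_1L^{(0)}_2\check r_{12}$ and
$\check r_{12}L^{(1)}_1L^{(0)}_2=L^{(0)}_1L^{(1)}_2\check r_{12}$,
and specializes them by $L^{(0)}\mapsto{\mathrm D}$, $L^{(1)}\mapsto{\mathrm P}$. Reading off the $e_{x,y}$-components in the third space, it gets that $\check r$ commutes with $(\pi\otimes\pi)\Delta(h_x)$ and with $(\pi\otimes\pi)\Delta^{(op)}(L^{(1)}_{x,y})$. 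It then uses ${\mathrm P}\check r{\mathrm P}=\check r$ to replace $\Delta^{(op)}$ by $\Delta$. The passage to $N$ sites is only asserted.

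\textbf{How the two routes compare.} Your direct verification is more elementary and self-contained. Your coassociativity reduction also supplies the $N$-site step that the paper leaves implicit. The paper's route, in exchange, explains the symmetry as the intertwining property of the $R$-matrix itself. Your proposed cross-check is essentially the paper's proof. Note that it produces the statement for $\Delta^{(op)}$, so there too you would need ${\mathrm P}\check r{\mathrm P}=\check r$.

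\textbf{The step you did not carry out.} You only state that you expect the final identity to hold. It does hold, and it needs no case analysis. Write ${\mathrm D}=\sum_a e_{a,a}\otimes d_a=\sum_a d_a\otimes e_{a,a}$ (Remark \ref{repr}), use $d_a^2=\id_n$, and split your identity into its two terms:
\[
{\mathrm D}(e_{y,x}\otimes d_y)=e_{y,x}\otimes d_y^2=e_{y,x}\otimes d_x^2=(e_{y,x}\otimes d_x){\mathrm D},\qquad {\mathrm D}(d_x\otimes e_{y,x})=d_yd_x\otimes e_{y,x}=(d_y\otimes e_{y,x}){\mathrm D}.
\]
This shows exactly where involutivity enters: $\alpha\in\{0,1\}$, i.e.\ $d_a^2=\id_n$, is needed in the first identity. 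After conjugation by ${\mathrm P}$, each of these two identities is equivalent to one of the two mixed RTT relations the paper specializes. So your term-by-term splitting is the matrix shadow of the paper's argument.
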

\begin{proof}
We first recall the following relations derived in Theorem \ref{alg1} (see also footnote 1)
\begin{equation}
\check r_{12} L^{(0)}_{1} L^{(0)}_{2} = L^{(0)}_{1} L_{2}^{(0)} \check r_{12}, ~~~~ \check r_{12} L^{(0)}_{1} L^{(1)}_{2} = L^{(1)}_{1} L_2^{(0)} \check r_{12}, ~~~~ \check r_{12} L^{(1)}_{1} L^{(0)}_{2} = L^{(0)}_{1} L_2^{(1)} \check r_{12}\label{basiceq}
\end{equation}
where $L^{(0)} = \underset{x \in[n]}{\sum}e_{x,x} \otimes h_x,$ $L^{(1)} = \underset{x,y \in [n]}{\sum} e_{x,y} \otimes L^{(1)}_{x,y},$
and from Remark \ref{repr}, equations hold for $L^{(0)}  \mapsto {\mathrm D}$ and $L^{(1)} \mapsto {\mathrm P}.$ Then from the first  equation of (\ref{basiceq}):
\begin{eqnarray}
    && \check r  (\pi \otimes \pi)\Delta(h_x) =  (\pi \otimes \pi)\Delta(h_x)\ \check r \label{eq21}
    \end{eqnarray}
and from the last two equations of (\ref{basiceq}):
$
(\pi \otimes \pi)\Delta^{(op)}(L_{x,y}^{(1)})\check r = \check r(\pi \otimes \pi)\Delta^{(op)}(L_{x,y}^{(1)}).$
 Recall also that $\Delta(L^{(1)}_{x,y}) = L_{x,y} \otimes h_x + h_y \otimes L_{x,y},$ then $\Delta^{(op)}(L^{(1)}_{x,y}) = L_{x,y} \otimes h_y + h_x \otimes L_{x,y}.$ 
We observe that $\check r_{12} = \check r_{21},$ which leads  to 
\begin{equation}
    (\pi \otimes \pi)\Delta(L_{x,y}^{(1)})\check r = \check r(\pi \otimes \pi)\Delta(L_{x,y}^{(1)}).\label{eq23}
\end{equation}

We conclude from (\ref{eq21}), (\ref{eq23}), $\big [\check r,\ (\pi \otimes \pi)\Delta(\mathfrak{g}) \big ]=0$ for all $\mathfrak{g} \in \mathfrak{gl}_{k,m},$ then recalling also the $N$-coproducts and the definition of $\check r_j$ (\ref{tensor}) we deduce that $\big [ \check r_j,\ \pi^{\otimes N} \Delta(\mathfrak{g})\big ]=0$ for all $j\in [N-1]$ and $\mathfrak{g} \in \mathfrak{gl}_{k,m}$
\end{proof}

Henceforth, we drop the superscript in $L^{(1)}_{x,y} \in \mathfrak{gl}_{k,m}$ and write $L_{x,y}$ (see Corollary \ref{remg}).
\begin{rem} \label{nota1} ({\bf Notation})
We introduce at this point some notation to be used in the following propositions. 
\begin{enumerate}

\item  We introduce the shorthand notation:
\begin{equation}
 \qquad \quad  {\cal E}_x:= \pi^{\otimes N}\Delta^{(N)}(\epsilon_x), ~~{\mathfrak h}_x : = \pi^{\otimes N}\Delta^{(N)}(h_x), ~~{\mathfrak e}_x:=  \pi^{\otimes N}\Delta^{(N)}(e_x), ~~{\mathfrak f}_x : = \pi^{\otimes N} \Delta^{(N)}(f_x), \nonumber
\end{equation}
and ${\mathfrak t}_{x,y} : =  \pi^{\otimes N}\Delta^{(N)}(L_{x,y}),$ $x, y\in [n].$
Recall $\epsilon_x,$ $x\in [n],$ $\ e_x,\ f_x,$ $x \in [n-1]$ are defined in Remark \ref{gl} and the map $\pi: \mathfrak{gl}_{k,m} \to \EEnd({\mathbb C}^n)$ is defined in Remark \ref{repr}.

\item  Let $\mathfrak{u}^+ : = \underbrace{e_1 \otimes e_1 \otimes \ldots  \otimes e_1}_N$ and $\mathfrak{u}^- : =\underbrace{e_n\otimes e_n \otimes\ldots \otimes e_n}_N.$ Notice, ${\mathfrak f}_x \mathfrak{u}^+ =0,$ ${\mathfrak e}_x \mathfrak{u}^- = 0,$ for all $x \in [n-1].$ 
Also, utilizing the notation of part (1) above we introduce the vectors $\mathfrak{u}^{\pm}_{m_1, m_2, \ldots, m_n} \in ({\mathbb C}^n)^{\otimes N}:$ 
\begin{eqnarray}
&& \mathfrak{u}^{+}_{m_1, m_2, \ldots, m_n} := {\mathfrak t}^{m_n}_{1,n} {\mathfrak t}^{m_{n-1}}_{1,n-1}\ldots  {\mathfrak t}_{1,3}^{m_3} {\mathfrak t}_{1,2}^{m_2} \ \mathfrak{u}^+, ~~~~ \mathfrak{u}^{-}_{m_1, m_2, \ldots, m_n} :=   {\mathfrak t}_{n, n-1}^{m_{n-1}}{\mathfrak t}^{m_{n-2}}_{n,n-2} \ldots {\mathfrak t}^{m_2}_{n,2} {\mathfrak t}^{m_1}_{n,1}\ \mathfrak{u}^-, \label{vectors} \end{eqnarray}
$ \underset{x \in [n]}{\sum} m_x =N.$ We recall from Corollary (\ref{remg}) that $\big [{\mathfrak t}_{1,x},\ {\mathfrak t}_{1,y}\big] =0$ and $ \big [ \big [{\mathfrak t}_{n,x}, {\mathfrak t}_{n,y} \big ] \big] =0,$  for all $x,y\in [n].$ Henceforth, whenever we write $\underset{x}{\prod} \mathfrak t^{m_x}_{p,x},$ $p \in \big \{1, n\big \}$ we mean the ordered products of (\ref{vectors}). Moreover, due to $L^2_{x,y}=0$ for $x\in [k]$ and $y \in X_k^+$ or $x \in X_k^+$ and $y \in [k],$ we deduce that 
\begin{enumerate}
\item in $\mathfrak{u}^+_{m_1, m_2, \ldots, m_n},$ $m_x \in \big \{0,1, \ldots, N\big \}$ if $x \in [k]$ and $m_x \in \big \{0, 1\big \} $ if $x \in X_k^+.$
\item in $\mathfrak{u}^-_{m_1, m_2, \ldots, m_n},$ $m_x \in\big \{0,1, \ldots, N\big \}$ if $x \in X_k^+$ and $m_x \in\big \{0, 1\big \}$ if $x \in [k].$\end{enumerate}

\end{enumerate}
\end{rem}


\begin{thm}  \label{basic12} Let $L_{x,y} \in \mathfrak{gl}_{k,m}$ (see Corollary \ref{remg}) and  
$ \mathfrak{u}^{\pm}_{m_1,\ldots, m_n} \in ({\mathbb C}^n)^{\otimes N}$ be defined in Remark (\ref{nota1}).
Consider also the sum of all the generators of the braid group $B_N$, ${\cal H} : = \underset{j \in [N-1]}{\sum} \check r_j \in \EEnd(({\mathbb C}^n)^{\otimes N}),$
where $\check r$ is given in Proposition \ref{pro11}. Then:
\begin{enumerate}
\item  ${\cal H} \mathfrak{u}^{\pm}_{m_1, \ldots, m_n} = \pm (N-1) \mathfrak{u}^{\pm}_{m_1, \ldots, m_n}. $

\item  The action of $\mathfrak{gl}_{k,m}$ on $\mathfrak{u}^{\pm}_{m_1, m_2, \ldots, m_n},$  $\underset{y\in [n]}{\sum} m_y =N$ is given  by,
\begin{eqnarray}
&&{\mathfrak h}_x\ \mathfrak{u}^{\pm}_{m_1, \ldots, m_x, m_{x+1}, \ldots} = (-1)^{m_x}\ \mathfrak{u}^{\pm}_{m_1,\ldots, m_x, m_{x+1}, \ldots}, ~~x \in X_k^+
\nonumber\\
&& {\cal E}_x\ \mathfrak{u}^{\pm}_{m_1,\ldots, m_x, m_{x+1},\ldots} =  m_x\ \mathfrak{u}^{\pm}_{m_1, \ldots, m_x, m_{x+1}, \ldots}, ~~x \in [k] \nonumber\\
&& {\cal E}_x\ \mathfrak{u}^{\pm}_{m_1,\ldots, m_x, m_{x+1},\ldots} =  m_x\ (-1)^{m_x -1} \mathfrak{u}^{\pm}_{m_1, \ldots, m_x, m_{x+1}, \ldots}, ~~~x\in X_k^+ \nonumber\\
&& {\mathfrak e}_x\  \mathfrak{u}^{\pm}_{m_1, \ldots, m_x, m_{x+1}, \ldots } = b^{\pm}_x\ \mathfrak{u}^{\pm}_{ m_1,\ldots, m_x-1, m_{x+1}+1, \ldots },~~x\in[n-1] \nonumber\\
&& {\mathfrak f}_x\ \mathfrak{u}^{\pm}_{m_1, \ldots, m_x, m_{x+1}, \ldots }   = c^{\pm}_x\ \mathfrak{u}^{\pm}_{m_1, \ldots, m_x+1, m_{x+1}-1, \ldots },~~x\in [n-1], \nonumber \end{eqnarray}
where  
\begin{itemize}
\item for all $x \neq 1,$
$ b^+_x = m_x,$ $c^+_x =m_{x+1},$ if $ x+1\in [k];$\\
    $~b^+_x = (-1)^{m_{x+1}}m_x,$ $c^+_x = m_{x+1},$ if $ \ x+1 \in X_k^+,$ 
   \item  $b^+_1= 1,$ $c^+_1 = (m_1+1)m_2,$
 \end{itemize}
$ $
   \noindent and
   \begin{itemize}
   \item for all $x \neq n-1,$
 $ b^-_x = m_x,$ $c^-_x =m_{x+1},$ if $ x+1\in [k];$\\
  $~b^-_x = (-1)^{m_{x+1} +m_x-1}m_x,$ $c^-_x = m_{x+1},$ if $\  x+1 \in X_k^+,$ 
 \item $b^-_{n-1}= (-1)^{m_n +m_{n-1}-1}m_{n-1}(m_n+1),$ $c^-_{n-1} = 1.$   
  \end{itemize}
   \end{enumerate}
\end{thm}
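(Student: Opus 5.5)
Part (1) will follow from Proposition \ref{symm0} together with a direct computation on the reference states. Since each $\check r_j$ commutes with every ${\mathfrak t}_{x,y}=\pi^{\otimes N}\Delta^{(N)}(L_{x,y})$, the operator ${\cal H}$ commutes with all the ordered products $\prod_x{\mathfrak t}^{m_x}_{p,x}$ used in (\ref{vectors}). It therefore suffices to compute ${\cal H}\,\mathfrak{u}^{\pm}$.

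By (\ref{action1}), each $\check r_j$ acts on $\mathfrak{u}^+=e_1^{\otimes N}$ as $+1$, because $1\in[k]$. Likewise each $\check r_j$ acts on $\mathfrak{u}^-=e_n^{\otimes N}$ as $1-2\alpha=-1$, because $n\in X_k^+$ and $\alpha=1$. Summing over $j\in[N-1]$ gives the eigenvalues $\pm(N-1)$, and these pass to every $\mathfrak{u}^\pm_{m_1,\ldots,m_n}$ by the commutation.

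For part (2) I would first write down the $N$-fold coproducts explicitly. Iterating Proposition \ref{hopf} gives:
\begin{itemize}
\item ${\mathfrak h}_x=d_x^{\otimes N}$;
\item ${\mathfrak t}_{x,y}=\sum_{j}\pi(h_y)^{\otimes(j-1)}\otimes e_{y,x}\otimes \pi(h_x)^{\otimes(N-j)}$, where the precise ordering of the $h_y$ and $h_x$ factors is read off from $L_{1N+1}\cdots L_{12}$ in Remark \ref{remc}.
\end{itemize}
Consequently ${\mathfrak t}_{1,y}$ is a sum of one-site operators $e_{y,1}$ dressed by the diagonal sign matrices $d_y$. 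Applying ordered products of the ${\mathfrak t}_{1,y}$ to $e_1^{\otimes N}$ therefore produces signed symmetrized sums of basis words containing $m_y$ letters $y$. This expansion will be the working model of $\mathfrak{u}^+_{m_1,\ldots,m_n}$.

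The diagonal actions are then routine.
\begin{itemize}
\item ${\mathfrak h}_x$ multiplies each word by $(-1)^{\#\{x\}}=(-1)^{m_x}$.
\item ${\cal E}_x={\mathfrak t}_{x,x}$ is the sum of single-site operators $e_{x,x}$, with dressing factors $d_x$ on the other sites. For $x\in[k]$ these factors are trivial and the eigenvalue is $m_x$. For $x\in X_k^+$ the sign is $(-1)^{m_x-1}$, coming from the other $m_x-1$ letters $x$, and since $m_x\in\{0,1\}$ this is consistent with the stated formula.
\end{itemize}

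For ${\mathfrak e}_x={\mathfrak t}_{x,x+1}$ and ${\mathfrak f}_x={\mathfrak t}_{x+1,x}$, I would avoid manipulating words. Instead I would commute them through the ordered product in (\ref{vectors}) using the relations of Corollary \ref{remg} and Remark \ref{gl}, namely $[L_{x,y},L_{y,z}]=-h_yL_{x,z}$, the commuting and anticommuting rules of the same-row generators, the $h$–$L$ exchange relations, and $[f_x,e_x]=\epsilon_xh_{x+1}-\epsilon_{x+1}h_x$.

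For $\mathfrak{u}^+$, take ${\mathfrak e}_x$ with $x\neq1$. It does not annihilate $\mathfrak{u}^+$ directly, but commuting it leftward past ${\mathfrak t}_{1,x+1}^{m_{x+1}}$ and through ${\mathfrak t}_{1,x}^{m_x}$ converts one factor ${\mathfrak t}_{1,x}$ into ${\mathfrak t}_{1,x+1}$. This yields the coefficient $m_x$, together with a sign $(-1)^{m_{x+1}}$ whenever $x+1\in X_k^+$, since passing a fermionic-type generator past the ${\mathfrak h}$ dressing flips sign. For $x=1$, ${\mathfrak e}_1={\mathfrak t}_{1,2}$ is itself a creation operator, so $b^+_1=1$. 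The coefficient $c^+_1=(m_1+1)m_2$ arises from $[{\mathfrak f}_1,{\mathfrak t}_{1,2}^{m_2}]$, which produces Cartan terms ${\cal E}_1{\mathfrak h}_2-{\cal E}_2{\mathfrak h}_1$ that must then be evaluated on the state with $m_1$ ones. The $\mathfrak{u}^-$ case is symmetric, built with ${\mathfrak t}_{n,y}$, but now the $h$-signs enter differently; this accounts for the exponents $m_{x+1}+m_x-1$ and the exceptional $b^-_{n-1}$.

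I expect the main obstacle to be this sign and coefficient bookkeeping in the ${\mathfrak e}_x,{\mathfrak f}_x$ actions, in particular the exceptional indices $x=1$ and $x=n-1$. There the Cartan element $\epsilon_xh_{x+1}-\epsilon_{x+1}h_x$ must be evaluated on intermediate vectors, and the ordering convention in $\Delta^{(N)}$ affects the signs. I would check each formula on $N=2$ with $\mathfrak{gl}_{1,1}$ and fix the conventions there before doing the general induction on $m_y$.
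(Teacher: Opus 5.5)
Your proposal is correct and follows the paper's own proof. Part (1) comes from Proposition~\ref{symm0} together with the fact that each $\check r_j$ acts as $+1$ on $e_1^{\otimes N}$ and as $-1$ on $e_n^{\otimes N}$; part (2) reads off the diagonal actions and commutes $\mathfrak{e}_x,\mathfrak{f}_x$ through the ordered products (\ref{vectors}) using Corollary~\ref{remg} and $[\mathrm{a},\mathrm{b}^n]=\sum_{p}\mathrm{b}^p[\mathrm{a},\mathrm{b}]\mathrm{b}^{n-p-1}$. Two small fixes: (i) for $x\neq 1$ one does have $\mathfrak{e}_x\mathfrak{u}^+=0$, since $e_{x+1,x}e_1=0$, and this is exactly what kills the leftover term $\mathfrak{t}_{1,x}^{m_x}\mathfrak{e}_x\mathfrak{u}^+$; (ii) for $c^+_1$, note $[\mathfrak{f}_1,\mathfrak{t}_{1,y}]=-\mathfrak{t}_{2,y}\neq 0$ for $y\geq 3$, so first use $[\mathfrak{t}_{1,x},\mathfrak{t}_{1,y}]=0$ to move $\mathfrak{t}_{1,2}^{m_2}$ to the far left; then $\mathfrak{f}_1$ kills the remaining vector (it has no letter $2$), and the Cartan terms act on intermediate vectors with $m_1+m_2-q$ ones, $q=0,\ldots,m_2-1$, whose eigenvalues sum to $(m_1+1)m_2$ (not a single evaluation on a state with $m_1$ ones).
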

\begin{proof}

$ $

\begin{enumerate}
\item We first recall the action of the elements $\check r_j,$ $j \in [N-1]$ on the standard canonical basis of $({\mathbb C}^{n})^{\otimes  N}$ given in (\ref{action1}). We then immediately deduce that ${\cal H}\ \mathfrak{u}^{\pm}=  \pm (N-1)\ \mathfrak{u}^{\pm}.$
 Due to Proposition \ref{symm0} $\big [ {\cal H}, \Delta^{(N)}(\mathfrak{g})\big ] =0,$ $\mathfrak{g} \in \mathfrak{gl}_{k,m,}$ hence it immediately follows that,\\
 ${\cal H}\ \mathfrak{u}^{\pm}_{m_1, m_2, \ldots, m_n} = \pm (N-1)\ \mathfrak{u}^{\pm}_{m_1, m_2, \ldots, m_n}.$

 \item By Corollary \ref{remg} and using the notation introduced in Remark \ref{nota1} for all, $x, y, z \in [n]$:
 \begin{eqnarray}
 && \big [\mathfrak{t}_{x,y},\ \mathfrak{t}_{z,x} \big ] = \mathfrak{t}_{z,y}\mathfrak{h}_x,~~~\big [\mathfrak{t}_{x,y},\ {\mathfrak h}_z\big ] =0, ~~z\neq x,y \nonumber \\ && \big [\big [\mathfrak{t}_{x,z},\ {\mathfrak h}_z\big ]\big] =\big [\big [\mathfrak{t}_{z,x},\ {\mathfrak h}_z\big ]\big] = 0, ~~z\in X_k^+,~~~z\neq x\nonumber\\ 
 && \big [ {\mathfrak t}_{x,y},\ {\mathfrak t}_{x,z} \big ] =0, ~~~~ \big [ \mathfrak{t}_{y,x},\ \mathfrak{t}_{z,x} \big ] =0,~~~ x\in [k], ~~~x\neq y,z \nonumber\\
 && \big [ \big [{\mathfrak t}_{x,y},\ {\mathfrak t}_{x,z} \big ] \big ]=0,~~~~\big [ \big [{\mathfrak t}_{y,x},\ {\mathfrak t}_{z,x} \big ] \big ]=0,~~~x \in X_k^+,~~~~x\neq y,z. \label{relations}
 \end{eqnarray}
 The action of ${\cal E}_x, {\mathfrak h}_x$ on $\mathfrak{u}^{\pm}_{m_1,m_2, \ldots, m_n}$ is straightforward, so we focus on the action of ${\mathfrak e}_x, {\mathfrak f}_x.$
 \begin{itemize}
     \item For $x+1 \in [k],$ $x \neq 1$
 \begin{eqnarray}
     {\mathfrak e}_x\ \mathfrak{u}^{+}_{m_1, \ldots, m_x, m_{x+1}, \ldots, m_n} &=& \prod_{x\neq y=2}^n \mathfrak{t}^{m_y}_{1,y}\ \mathfrak{e}_x {\mathfrak t}^{m_x}_{1,x}\ \mathfrak{u}^+ \nonumber\\ &  =&  \prod_{x\neq y=2}^n \mathfrak{t}^{m_y}_{1,y}\  \big ( {\mathfrak t}^{m_x}_{1,x} {\mathfrak e}_x + \sum_{p=0}^{m_x-1} \mathfrak{t}^p_{1,x} \mathfrak{t}_{1, x+1} \mathfrak{t}^{m_{x-1-p}}_{1,x}\big )\ \mathfrak{u}^+ \nonumber\\ 
     & =& m_x\ \mathfrak{u}^{+}_{m_1, \ldots, m_x-1, m_{x+1}+1, \ldots, m_n.} \nonumber \end{eqnarray}
where we have used $\big [{\mathrm a},\ {\mathrm b}^n \big ] = \underset{0\leq p \leq n-1}{\sum} {\mathrm b}^p \big [{\mathrm a},\ {\mathrm b}\big ] {\mathrm b}^{n-p-1},$  for
 ${\mathrm a}, {\mathrm b} \in Y(\mathfrak{gl}_{k,m}).$ Similarly, 
$~{\mathfrak f}_x\ \mathfrak{u}^{+}_{m_1, \ldots, m_x, m_{x+1}, \ldots, m_n} = m_{x+1}\  \mathfrak{u}^{+}_{m_1, \ldots, m_x+1, m_{x+1}-1, \ldots, m_n}.
$
\\
For $x =1 $ in particular and $k>1$: $ \mathfrak{e}_1\ \mathfrak{u}^+_{m_1, m_2, \ldots, m_n} = \mathfrak{u}^+_{m_1-1, m_2+1, 
\ldots, m_n}$\\ and via relations (\ref{relations}) it follows 
$ \mathfrak{f}_1\ \mathfrak{u}^+_{m_1, m_2, \ldots , m_n} = (m_1+1)m_2\ \mathfrak{u}^+_{m_1+1, m_2-1, \ldots, m_n}$

\item For $x+1 \in X_k^+,$ we similarly deduce from relations (\ref{relations})\\
${\mathfrak e}_x\ \mathfrak{u}^{+}_{m_1, \ldots, m_x, m_{x+1}, \ldots, m_n} = (-1)^{m_{x+1}}m_x\ \mathfrak{u}^{+}_{m_1, \ldots, m_x-1, m_{x+1}+1, \ldots, m_n}$ and\\ 
$~{\mathfrak f}_x\ \mathfrak{u}^{+}_{m_1, \ldots, m_x, m_{x+1}, \ldots, m_n} = m_{x+1}\ \mathfrak{u}^{+}_{m_1, \ldots, m_x+1, m_{x+1}-1, \ldots, m_n}.$ 
\\
For $x =1 $ in particular and $k=1$: $ \mathfrak{e}_1\ \mathfrak{u}^+_{m_1, m_2, \ldots , m_n} = \mathfrak{u}^+_{m_1-1, m_2+1, \ldots , m_n}$ and via\\ relations (\ref{relations}) it follows 
$ \mathfrak{f}_1\ \mathfrak{u}^+_{m_1, m_2, \ldots , m_n} = (-1)^{m_2-1} (m_1+1)m_2\ \mathfrak{u}^+_{m_1+1, m_2-1, \ldots , m_n},$ where in general $m_x+1$ is defined $\mod 2$ for all $x \in X_k^+$ in all the coefficients of the right-hand side of the equations of part 2 of Theorem \ref{basic12} and recall ${\mathfrak t}^2_{1,x} =0,$ for all $x \in X_k^+.$
We also use, $(-1)^{m-1} m = m,$ if $m \in \big \{0,\ 1\big \}.$
 \end{itemize}
Similarly, for the action of $\mathfrak{gl}_{k,m} $ on $ {\mathfrak u}^{-}_{m_1, m_2, \ldots, m_n};$ in this case $m_x+1$ is defined $\mod 2$ for all $x \in [k]$ in all the coefficients of the right-hand side of the equations of part 2 of Theorem \ref{basic12} and ${\mathfrak t}^2_{n,x} =0,$ for all $x \in [k].$
\qedhere
\end{enumerate}
 \end{proof}
Theorem \ref{basic12} (part 2) establishes that the sets $\big \{\mathfrak{u}^{\pm}_{m_1, m_2, \ldots, m_n}\big \}_{\sum m_j=N}$ form natural bases for distinct irreducible representations of $\mathfrak{gl}_{k,m}$. We refer to these as \textit{combinatorial bases}. A detailed investigation of irreducible representations of $\mathfrak{gl}_{1,1}$ is provided in the following section. Furthermore, we note that while any linear combination of the braid group generators can theoretically serve as a Hamiltonian, namely $  \mathcal{H} = \underset{j \in [N-1]}{\sum} c_j \hat r_j\in \mbox{End}(({\mathbb C}^n)^{\otimes N}),$ $~c_j \in \mathbb{C},$
this work focuses exclusively on the uniform sum of these generators. This choice recovers well-known Hamiltonians of quantum spin-chain-like systems subject to special boundary conditions (see, e.g., \cite{DoikouMartin, Doikou1, Sklyanin}; see also the Heisenberg XX model example in the final section). 

We consider next as an example relevant to Theorem \ref{basic12} the spectral decomposition of ${\cal H} = \underset{ j \in [N-1]}{\sum} \check r_j,$ when $N=2.$ In this case we analyze the spectral decomposition of $\check r,$ and produce the bases of the corresponding irreducible representations. As an illustrative example we graphically depict the action of $\mathfrak{gl}_{1,1}$ and $\mathfrak{gl}_{2,1}$ on these bases.
\begin{exa} \label{exa2} Let $\check r$ be the $\mathfrak{gl}_{k,m}$ invariant solution of the braid equation of Proposition \ref{pro11} and 
$\big \{e_x \big  \}_{x \in [n]}$ be the standard canonical basis of ${\mathbb C}^n.$ 

Then, $\check r$ has 2 eigenvalues: 
\begin{enumerate}
    \item $\lambda_1 =1$ with multiplicity $\frac{n(n-1)}{2} +k$ and corresponding (non-normalized) eigenvectors according to Theorem \ref{basic12} are (we simplify the notation compared to Theorem \ref{basic12}):\\
    $u^{+}_{1,1} := e_1 \otimes e_1,$ $~u^+_{j,j}: = {\mathfrak t}^2_{1,j}u^{+}_{1,1} = 2 e_j \otimes e_j,$ $j\in [k],$ \\
      $u^+_{1,j} := {\mathfrak t}_{1,j} u^{+}_{1,1} = e_1 \otimes e_j + e_j \otimes e_1,$ $ j \in \big \{2,3, \ldots, n\big \},$\\    $u^{+}_{i,j} := {\mathfrak t}_{1,j}{\mathfrak t}_{1,i} u^{+}_{1,1} = e_i \otimes e_j + e_j \otimes e_i,$ $i< j \in \big \{2,3, \ldots, n\big \}.$ 
    
    \item $\lambda_2 =-1$ with multiplicity $ \frac{n(n+1)}{2}-k$ and corresponding (non-normalized) eigenvectors (Theorem \ref{basic12})\\ 
    $u^{-}_{n,n} = e_n \otimes e_n,$ $\ u_{j,j}^{-} = {\mathfrak t}^2_{n,j}u^{-}_{n,n}= 2 e_j \otimes e_j, $ $\ j \in X_k^+,$\\ 
     $u^{-}_{n,j} :=  {\mathfrak t}_{n,j} u^{-}_{n,n}  = e_n \otimes e_j - e_j \otimes e_n,$ $\ j\in [n-1],$\\
    $u^{-}_{i,j} := {\mathfrak t}_{n,j}{\mathfrak t}_{n,i} u^{-}_{n,n} = e_j \otimes e_i - e_i \otimes e_j,$ $\ i<j \in [n-1].$

\end{enumerate}
That is, $V_n^{\otimes 2} = V_{\lambda_1} \oplus V_{\lambda_2}$ and $\dim V_n = n,$ $\ \dim V_{\lambda_1} =\frac{n(n-1)}{2} +k,$ $\ \dim V_{\lambda_2} =\frac{n(n+1)}{2} -k.$ 
Each eigenspace is invariant under the action of $\mathfrak{gl}_{k,m}$ (see Theorem \ref{basic12}) and the eigenvectors provide bases of irreducible representations of $\mathfrak{gl}_{k,m}$ of dimensions $d_1= \frac{n(n-1)}{2} +k,$ $d_2 = \frac{n(n+1)}{2} -k$. 

The action of $\mathfrak{gl}_{k,m}$  on the basis $\big \{u^{+}_{x,y}\big \},$ $x,y \in [n],$ (Example \ref{exa2}) is illustrated schematically for two special cases: $\mathfrak{gl}_{1,1}$ and $\mathfrak{gl}_{2,1}$. The diagrams below depict the transition from one element of the basis to another under the action of $\mathfrak{gl}_{k,m}$. However, the coefficients associated to each transition are omitted for brevity (see Theorem \ref{basic12}, part 2 about the transition coefficients).
\begin{enumerate}
\item $\mathfrak{gl}_{1,1},$ a two-dimensional basis: $\zeta \in \big \{{\cal E}_1, {\cal E}_2, \mathfrak{h}\big\}$
$ $

\begin{center}
\footnotesize
\begin{tikzpicture}[shorten >=1pt,node distance=2.0cm,on grid,auto] 
   \node[state,inner sep=0, minimum size=2.0em] (q_0)   {$u^{+}_{1,1}$}; 
   \node[state, inner sep=0, minimum size=2.0em] (q_1) [right =of q_0] {$u^{+}_{1,2}$};    
   \path[->] 
 (q_0) edge[transform canvas={yshift=2.5pt}, above]  node {$\mathfrak e_1$} (q_1)
 (q_0) edge[loop left] node{$\zeta$} (q_0) 
 (q_1) edge[loop right] node{$\zeta$} (q_1)
 (q_1) edge[transform canvas={yshift=-2.5pt}, below]  node {$\mathfrak f_1$} (q_0)
;
\end{tikzpicture}
\end{center}

\item $\mathfrak{gl}_{2,1},$ a five-dimensional basis: $\zeta \in \big \{{\cal E}_1, {\cal E}_2, {\cal E}_3, \mathfrak{h}_1, \mathfrak{h}_2\big\}$

$ $

\begin{center}
\footnotesize
\begin{tikzpicture}[shorten >=1pt,node distance=2.2cm,on grid,auto] 
   \node[state, inner sep=0, minimum size=2.0em] (q_1) [below=of q_0] {$u^{+}_{2,2}$};    
   \node[state, inner sep=0, minimum size=2.0em] (q_2) [below left=of q_1] {$u^{+}_{1,2}$}; 
   \node[state, inner sep=0, minimum size=2.0em] (q) [left=of q_2] {$u^{+}_{1,1}$};
   \node[state, inner sep=0, minimum size=2.0em] (q_3) [below right=of q_1] {$u^{+}_{2,3}$}; 
    \node[state,inner sep=0, minimum size=2.0em](q_4) [below right=of q_2] {$u^{+}_{1,3}$};
   \path[->] 
 (q) edge[transform canvas={yshift=2.5pt}, above]  node {$\mathfrak e_1$} (q_2)
 (q_2) edge[transform canvas={yshift=-2.5pt}, below]  node {$\mathfrak f_1$} (q)
 (q_1) edge[transform canvas={xshift=3pt},right]  node {$\mathfrak f_1$} (q_2)
 (q_2) edge[transform canvas={xshift=-3pt},left]  node {$\mathfrak e_1$} (q_1)
 (q_1) edge[loop right] node{$\zeta$} (q_1) 
 (q) edge[loop left] node{$\zeta$} (q) 
 (q_2) edge[loop above] node{$\zeta$} (q_2) 
(q_3) edge[loop right] node{$\zeta$} (q_3)
(q_4) edge[loop right] node{$\zeta$} (q_4)  
(q_1) edge[transform canvas={xshift=3pt}, right] node {$\mathfrak e_2$} (q_3) 
 (q_3) edge[transform canvas={xshift=-3pt}, left] node {$\mathfrak f_2$} (q_1) 
(q_2) edge[transform canvas={xshift=3pt}, right]  node {$\mathfrak e_2$} (q_4)
(q_4) edge[transform canvas={xshift=-3pt}, left]  node {$\mathfrak f_2$} (q_2)
(q_3) edge[transform canvas={xshift=3pt}, right]  node {$\mathfrak f_1$} (q_4) 
(q_4) edge[transform canvas={xshift=-3pt}, left]  node {$\mathfrak e_1$} (q_3) 
;
\end{tikzpicture}
\end{center}
\end{enumerate}

The action of $\mathfrak{gl}_{k,m}$ on the basis $\big \{u^{-}_{x,y}\big \},$ $x,y \in [n],$ (Example \ref{exa2}) is also depicted schematically for the special cases $\mathfrak{gl}_{1,1}$ and $ \mathfrak{gl}_{2,1}:$

\begin{enumerate}
\item $\mathfrak{gl}_{1,1},$ a two-dimensional basis: $\zeta \in \big \{{\cal E}_1, {\cal E}_2, \mathfrak{h}\big\}$

$ $

\begin{center}
\footnotesize
\begin{tikzpicture}[shorten >=1pt,node distance=2.0cm,on grid,auto] 
   \node[state,inner sep=0, minimum size=2.0em] (q_0)   {$u^{-}_{2,2}$}; 
   \node[state, inner sep=0, minimum size=2.0em] (q_1) [right =of q_0] {$u^{-}_{1,2}$};    
   \path[->] 
 (q_0) edge[transform canvas={yshift=2.5pt}, above]  node {$\mathfrak f_1$} (q_1)
 (q_1) edge[transform canvas={yshift=-2.5pt}, below]  node {$\mathfrak e_1$} (q_0)
 (q_0) edge[loop left] node{$\zeta$} (q_0) 
 (q_1) edge[loop right] node{$\zeta$} (q_1) 
;
\end{tikzpicture}
\end{center}

\item $\mathfrak{gl}_{2,1},$ a four-dimensional basis: $\zeta \in \big \{{\cal E}_1, {\cal E}_2, {\cal E}_3, \mathfrak{h}_1, \mathfrak{h}_2\big\}$

$ $

\begin{center}
\footnotesize
\begin{tikzpicture}[shorten >=1pt,node distance=1.8cm,on grid,auto] 
   \node[state,inner sep=0, minimum size=2.0em] (q_0)   {$u^{-}_{3,3}$}; 
   \node[state, inner sep=0, minimum size=2.0em] (q_1) [right=of q_0] {$u^{-}_{2,3}$};    
   \node[state, inner sep=0, minimum size=2.0em] (q_2) [right =of q_1] {$u^{-}_{1,3}$}; 
   \node[state, inner sep=0, minimum size=2.0em] (q_3) [right =of q_2] {$u^{-}_{1,2}$}; 
   \path[->] 
 (q_0)  edge[transform canvas={yshift=2.5pt}, above]  node {$~~\mathfrak f_2$} (q_1)
 (q_1) edge[transform canvas={yshift=-2.5pt}, below]  node {$\mathfrak e_2~~$} (q_0)
 (q_1) edge[transform canvas={yshift=2.5pt}, above]  node {$\mathfrak f_1$} (q_2)
 (q_2) edge[transform canvas={yshift=-2.5pt}, below]  node {$\mathfrak e_1$} (q_1)
 (q_2) edge[transform canvas={yshift=2.5pt}, above] node {$\mathfrak f_2$} (q_3) 
 (q_3) edge[transform canvas={yshift=-2.5pt}, below] node {$\mathfrak e_2$} (q_2) 
 (q_0) edge[loop left] node{$\zeta$} (q_0) 
 (q_1) edge[loop above] node{$\zeta$} (q_1) 
 (q_2) edge[loop above] node{$\zeta$} (q_2) 
(q_3) edge[loop right] node{$\zeta$} (q_3)
;
\end{tikzpicture}
\end{center}
\end{enumerate}
\end{exa}
Investigating the spectral decomposition of the \(\mathfrak{gl}_{k,m}\)-invariant spin-chain Hamiltonian for \(N>2\), following the approach of \cite{mybraided}, represents a natural and vital application of the present findings.

\section{\texorpdfstring{The algebra $\mathfrak{gl}_{1,1}$: irreducible representations}{}}
\noindent In this section, we focus on the study of irreducible representations of the algebra \(\mathfrak{gl}_{1,1}\), 
whose defining algebraic relations are given in Example \ref{gl11}. We note that for the 
\(\mathfrak{gl}_{1,1}\)-invariant \(\check{r}\)-matrix 
(see Proposition \ref{symm0}), the sum 
\(\mathcal{H} = \underset{j \in [N-1]}{\sum} \check{r}_j\) corresponds to 
the Hamiltonian of the Heisenberg XX model in the presence of an external longitudinal magnetic field and special open boundary conditions (see \cite{XX}, as well as \cite{XX2} for a recent related study). 
Indeed, in this case, the \(\check{r}\)-matrix can be written explicitly as \(\check{r}=\frac{1}{2}\big(\sigma ^{x}\otimes \sigma ^{x}+\sigma ^{y}\otimes \sigma ^{y}+\sigma ^{z}\otimes \>\mathrm{id}\>+\>\mathrm{id}\>\otimes \sigma ^{z}\big),\) where \(\sigma^x\), \(\sigma^y \), \(\sigma^z \) are the standard $2 \times 2$ Pauli matrices. Recall also the index notation: for any \(A \in \operatorname{End}(\mathbb{C}^n)\), we define \(A_j := \operatorname{id}^{\otimes(j-1)} \otimes A \otimes \operatorname{id}^{\otimes (N-j)}\). The corresponding XX Hamiltonian is then expressed as: $$
    {\cal H} = \frac{1}{2}\underset{j \in [N-1]}{\sum}\big (\sigma_j^x \sigma^x_{j+1} + \sigma_j^y \sigma^y_{j+1}  + 2 \sigma_j^z \big ) - \frac{1}{2} \big (\sigma_1^z - \sigma_N^z\big ) \in \mbox{End}(({\mathbb C}^2)^{\otimes N}). $$

\begin{lemma} \label{funda0} ({\bf Highest-weight representations}) Let $\varphi: \mathfrak{gl}_{1,1} \to \EEnd(V),$ such that 
$$\epsilon_j \mapsto \varepsilon_j, ~~j \in \big \{1,2\big \}, ~~~ h \mapsto \mathrm{h}, ~~~e \mapsto \mathrm{e}, ~~~f \mapsto \mathrm{f}$$ and
for some $u \in V,$ $j\in\big  \{1,2\big \}:$
$ \varepsilon_j u = \lambda_j u,$ $ ~{\mathrm h} u = \xi u$ and  $~{\mathrm f} u=0,$
$~\lambda_j,\ \xi \in {\mathbb C}.$ Let also $w: = {\mathrm e} u.$ Then, 
\begin{eqnarray}
 \mathrm{e} w =0, ~~{\mathrm h} w = -\xi w,~~\varepsilon_1 w = (\lambda_1 -1) w, ~~\varepsilon_2 w = -(\lambda_2 - \xi) w, ~~\mathrm{f} w = (\lambda_1\xi -\lambda_2) u.\nonumber 
\end{eqnarray}
\end{lemma}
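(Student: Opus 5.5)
This is a direct computation: we apply the images under $\varphi$ of the defining relations of $\mathfrak{gl}_{1,1}$ (Example \ref{gl11}) to $w = \mathrm{e}u$, and use the eigenvalue data on $u$ together with $\mathrm{f}u = 0$. Since $\varphi$ is an algebra homomorphism, each relation of Example \ref{gl11} holds with $\epsilon_j, h, e, f$ replaced by $\varepsilon_j, \mathrm{h}, \mathrm{e}, \mathrm{f}$. The plan is to treat the five claimed identities one by one, in the following order.

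First, $\mathrm{e}w = \mathrm{e}^2 u = 0$ is immediate from $e^2 = 0$.

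Second, for $\mathrm{h}$ we use the anticommutator $[[e,h]] = 0$, i.e. $\mathrm{h}\mathrm{e} = -\mathrm{e}\mathrm{h}$. This gives $\mathrm{h}w = -\mathrm{e}\mathrm{h}u = -\xi w$.

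Third, for $\varepsilon_1$ the relation $[\epsilon_1, e] = -e$ gives
\[
\varepsilon_1 w = \mathrm{e}\varepsilon_1 u - \mathrm{e}u = (\lambda_1 - 1)w.
\]

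Fourth, for $\varepsilon_2$ the relation $[[\epsilon_2, e]] = eh$ gives
\[
\varepsilon_2 w = -\mathrm{e}\varepsilon_2 u + \mathrm{e}\mathrm{h}u = -(\lambda_2 - \xi)w.
\]

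Fifth, for $\mathrm{f}$ we use $[f,e] = \epsilon_1 h - \epsilon_2$, which gives
\[
\mathrm{f}w = \mathrm{f}\mathrm{e}u = \mathrm{e}\mathrm{f}u + (\varepsilon_1\mathrm{h} - \varepsilon_2)u = 0 + (\lambda_1\xi - \lambda_2)u.
\]
Here we use that $\varepsilon_1$ and $\mathrm{h}$ commute (by $[h,\epsilon_j] = 0$), and that $\mathrm{h}u = \xi u$ is applied before $\varepsilon_1$.

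The only genuine obstacle is bookkeeping of the order of factors and signs in the anticommutator relations. In particular, one must read $[[\epsilon_2, e]] = eh$ as $\epsilon_2 e + e\epsilon_2 = eh$, with $h$ acting first on $u$. One should also double check that the relation $[f,e]$ is used with the correct sign convention, $[\mathrm{a},\mathrm{b}] = \mathrm{a}\mathrm{b} - \mathrm{b}\mathrm{a}$, so that $\mathrm{f}\mathrm{e} = \mathrm{e}\mathrm{f} + \varepsilon_1\mathrm{h} - \varepsilon_2$. Once these conventions are fixed, every identity follows in one line.
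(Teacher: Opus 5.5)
Your proposal is correct and is the paper's argument: the paper only says the lemma follows directly from the relations of Example \ref{gl11}, and your five one-line computations, using $e^2=0$, $[[e,h]]=0$, $[\epsilon_1,e]=-e$, $[[\epsilon_2,e]]=eh$ and $[f,e]=\epsilon_1h-\epsilon_2$, are exactly that verification, with the signs and orderings handled correctly. The commutativity $[h,\epsilon_1]=0$ you invoke in the last step is not actually needed, since $\mathrm{h}$ acts on $u$ first and $\varepsilon_1\mathrm{h}u=\xi\,\varepsilon_1u=\xi\lambda_1u$ directly.
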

\begin{proof}
The proof is straightforward and is based on the algebraic relation of $\mathfrak{gl}_{1,1}$ (Example \ref{gl11}).
\end{proof}
We conclude that the non-trivial highest-weight representations of $\mathfrak{gl}_{1,1}$ considered in Lemma \ref{funda0} are two-dimensional;  $ \big \{u,\  w  \big \},$ where $w ={\mathfrak e} u,$ is the basis of the corresponding two-dimensional vector space.

We next introduce combinatorial bases of irreducible representations of $\mathfrak{gl}_{1,1}.$ We first establish a convenient notation. 

\begin{rem} \label{not2}
We denote by $\varpi_p^{(N,p)}\in({\mathbb C}^2)^{\otimes N},$ $p \in  \big \{0,1, \ldots, N-1 \big \}$ a linear combination of all possible permutations of $\underbrace{e_1 \otimes \ldots \otimes e_1}_{N-p} \otimes \underbrace{e_2 \otimes \ldots \otimes e_2}_{p}.$
\end{rem}
\begin{pro}  ({\bf Combinatorial bases}) \label{fund1} 
Let $\varpi_p^{(N,p)} \in ({\mathbb C}^2)^{\otimes N},$ $p \in \big \{0,1, \ldots, N-1\big \}$ (introduced in Remark \ref{not2}) be such that:
$~{\mathfrak f} \varpi^{(N,p)}_p =0$ (recall $\mathfrak{e}, \mathfrak {f},$ $\mathfrak{h}, {\cal E}_{1}, {\cal E}_2$ are defined in Remark \ref{nota1}, Example \ref{gl11}). Let also $\varpi^{(N,p)}_{p+1}: = {\mathfrak e} \varpi^{(N,p)}_{p},$ then:
\begin{enumerate}
    
\item ${\cal E}_1 \varpi^{(N,p)}_l = (N-l) \varpi^{(N,p)}_l ,$ $~{\cal E}_2  \varpi^{(N,p)}_l = l(-1)^{l-1} \varpi^{(N,p)}_l,$ $~\mathfrak{h}\ \varpi^{(N,p)}_l = (-1)^l \varpi^{(N,p)}_l,$\\ $l\in \big \{p,\ p+1 \big \},$
$~{\mathfrak e} \varpi^{(N,p)}_{p+1} = 0,$ $~{\mathfrak f} \varpi^{(N,p)}_{p+1} = (-1)^pN \varpi^{(N,p)}_{p}.$ 
\item 
$\varpi^{(N,p)}_{p+1} \perp \varpi^{(N, p+1)}_{p+1}.$\end{enumerate}
\end{pro}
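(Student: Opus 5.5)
The plan is to make everything explicit in the fundamental representation of Remark \ref{repr} with $n=2$, $k=1$. There $e=L_{1,2}\mapsto e_{2,1}$, $f=L_{2,1}\mapsto e_{1,2}$, $\epsilon_x=L_{x,x}\mapsto e_{x,x}$ and $h\mapsto d:=\id_2-2e_{2,2}=\mathrm{diag}(1,-1)$.

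By Proposition \ref{hopf}, $\Delta(L_{x,y})=L_{x,y}\otimes h_x+h_y\otimes L_{x,y}$ with $h_1=1$. This gives $\Delta(e)=e\otimes 1+h\otimes e$, $\Delta(f)=f\otimes h+1\otimes f$, $\Delta(\epsilon_1)=\epsilon_1\otimes 1+1\otimes \epsilon_1$, $\Delta(\epsilon_2)=\epsilon_2\otimes h+h\otimes\epsilon_2$ and $\Delta(h)=h\otimes h$. Iterating by coassociativity (Remark \ref{remc}), $\mathfrak{e}$ is a sum over sites $j$ of $e_{2,1}$ at site $j$ with $d$ on the sites on one side and $\id$ on the other. $\mathfrak{f}$ is the analogous sum with $e_{1,2}$ and the $d$'s on the opposite side. ${\cal E}_1$ counts the factors $e_1$, ${\cal E}_2$ is $\sum_j d\otimes\cdots\otimes e_{2,2}\otimes\cdots\otimes d$, and $\mathfrak{h}=d^{\otimes N}$.

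\emph{Part (1), weights.} $\varpi^{(N,p)}_p$ is a combination of basis tensors with exactly $p$ factors $e_2$. Since $e_{2,1}e_1=e_2$, $\varpi^{(N,p)}_{p+1}=\mathfrak{e}\varpi^{(N,p)}_p$ is a combination of basis tensors with $p+1$ factors $e_2$. For a basis tensor with $l$ factors $e_2$ the weights follow directly:
\begin{itemize}
\item ${\cal E}_1$ gives $N-l$;
\item $\mathfrak{h}$ gives $(-1)^l$;
\item in ${\cal E}_2$ each of the $l$ terms with $e_{2,2}$ on an $e_2$ site picks up $(-1)^{l-1}$ from the $d$'s acting on the remaining $e_2$'s, and all other terms vanish, giving $l(-1)^{l-1}$.
\end{itemize}
Since these eigenvalues are independent of the particular basis tensor, they hold for $\varpi^{(N,p)}_l$, $l\in\{p,p+1\}$.

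\emph{Part (1), action of $\mathfrak{e}$ and $\mathfrak{f}$ on $\varpi^{(N,p)}_{p+1}$.} $\mathfrak{e}\varpi^{(N,p)}_{p+1}=\mathfrak{e}^2\varpi^{(N,p)}_p=0$ because $e^2=0$ in $\mathfrak{gl}_{1,1}$ (Example \ref{gl11}) and $\pi^{\otimes N}\Delta^{(N)}$ is an algebra homomorphism. For $\mathfrak{f}$, I apply Lemma \ref{funda0} with $u=\varpi^{(N,p)}_p$, $\lambda_1=N-p$, $\lambda_2=p(-1)^{p-1}$ and $\xi=(-1)^p$. This gives
\[
\mathfrak{f}\varpi^{(N,p)}_{p+1}=(\lambda_1\xi-\lambda_2)u=\big((N-p)(-1)^p+p(-1)^p\big)u=(-1)^pN\,\varpi^{(N,p)}_p .
\]

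\emph{Part (2).} Use the standard real inner product on $({\mathbb C}^2)^{\otimes N}$. The key identity compares transposes. Since $e_{2,1}^T=e_{1,2}$ and $d^T=d$, $\mathfrak{e}^T$ is a sum of $e_{1,2}$ at site $j$ with $d$'s on the same side that $\mathfrak{e}$ had them. Now multiply $\mathfrak{f}$ by $\mathfrak{h}=d^{\otimes N}$. Because $d^2=1$, the $d$'s in $\mathfrak{f}$ cancel and move to the complementary side, while at site $j$ one gets $e_{1,2}d=-e_{1,2}$. Hence $\mathfrak{f}\mathfrak{h}=-\mathfrak{e}^T$; with the opposite ordering convention for $\Delta^{(N)}$ one gets $\mathfrak{h}\mathfrak{f}$ instead, and the argument is identical.

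Set $v=\varpi^{(N,p+1)}_{p+1}$. It satisfies $\mathfrak{f}v=0$ and $\mathfrak{h}v=(-1)^{p+1}v$. Therefore
\[
\langle\varpi^{(N,p)}_{p+1},v\rangle=\langle\varpi^{(N,p)}_p,\mathfrak{e}^Tv\rangle=-\langle\varpi^{(N,p)}_p,\mathfrak{f}\mathfrak{h}v\rangle=(-1)^{p}\langle\varpi^{(N,p)}_p,\mathfrak{f}v\rangle=0 .
\]

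The main obstacle is bookkeeping the side on which the $d$'s sit in the iterated coproducts, so that the identity $\mathfrak{e}^T=\pm\mathfrak{f}\mathfrak{h}$ (or $\pm\mathfrak{h}\mathfrak{f}$) is correct. I would check it first for $N=2$ and then argue by induction on $N$ using coassociativity. The rest is routine.
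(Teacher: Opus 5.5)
Your proposal is correct and follows essentially the same route as the paper. The weights are read off from the number of $e_2$ factors, $\mathfrak{e}\varpi^{(N,p)}_{p+1}=0$ comes from $\mathfrak{e}^2=0$, the $\mathfrak{f}$-action comes from $[\mathfrak{f},\mathfrak{e}]={\cal E}_1\mathfrak{h}-{\cal E}_2$ (which is what Lemma \ref{funda0} packages), and orthogonality comes from a transpose identity. Your $\mathfrak{e}^T=-\mathfrak{f}\mathfrak{h}$ is equivalent to the paper's $\mathfrak{e}^T=\mathfrak{h}\mathfrak{f}$ because $\mathfrak{h}$ and $\mathfrak{f}$ anticommute, and both identities hold under either ordering convention for $\Delta^{(N)}$.
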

\begin{proof}

$ $

\begin{enumerate}
    \item The first three equations in part (1) are an immediate consequence of the structure of the states $\varpi^{(N,p)}_{l},$ $~ l\in \big \{p,\ p+1 \big \}$  and Remark \ref{nota1}; moreover, ${\mathfrak e} \varpi^{(N,p)}_{p+1} = 0,$ due to ${\mathfrak e}^2 =0.$ The last equation is proven using the algebraic relations of $\mathfrak{gl}_{1,1},$ in particular $\big [{\mathfrak f},\ \mathfrak{e} \big ] = {\cal E_1}{\mathfrak h} - {\cal E}_2$ and the fact that $ \mathfrak{f} \varpi^{(N,p)}_p =0$ and $\varpi^{(N,p)}_{p+1} = \mathfrak{e} \varpi^{(N,p)}_p. $ 

    \item We first recall the standard inner product in $({\mathbb C}^2)^{\otimes N},$ i.e. for every $a,\ b \in ({\mathbb C}^2)^{\otimes N}$ the inner product is defined as $ \langle a,\ b\rangle:= a^{\dagger} \cdot b,$ where $^{\dagger},$ denotes complex conjugation and transposition. Observe also (see Remark \ref{nota1}) that ${\mathfrak e}^T = {\mathfrak h} {\mathfrak f}$ (${\mathfrak e}^T = {\mathfrak e}^{\dagger}$). Then,
   $
    \langle \varpi^{(N,p)}_{p+1},\ \varpi^{(N,p+1)}_{p+1} \rangle =  \langle {\mathfrak e}\ \varpi^{(N,p)}_{p},\ \varpi^{(N,p+1)}_{p+1} \rangle
     = \langle \varpi^{(N,p)}_{p},\  {\mathfrak h}  {\mathfrak f}\ \varpi^{(N,p+1)}_{p+1}  \rangle =0. $
    \qedhere
\end{enumerate}
\end{proof}
Note also that by construction $\varpi_p^{(N,p)} \perp \varpi^{(N,p)}_{p+1}$, we then conclude from Proposition \ref{fund1} that $ \big \{ \varpi^{(N,p)}_p,\ 
\varpi^{(N,p)}_{p+1} \big \}$ is a combinatorial orthogonal basis of a two-dimensional irreducible representation of $\mathfrak{gl}_{1,1}.$

\begin{pro} \label{fund2}
Let $V_{N_i,p_i}$ be the two-dimensional vector space with a basis\\ 
$\big \{\varpi^{(N_i,p_i)}_{p_i},\ \varpi^{(N_i,p_i)}_{p_i+1}\big \},$ $ p_i \in \big \{0,1, \ldots ,N_i-1 \big \},$  $~i \in \big \{1,\ 2 \big \},$ as derived in Proposition \ref{fund1}. Then, 
$$V_{N_1,p_1} \otimes V_{N_2, p_2} = V_{N,p} \oplus V_{N, p+1}, $$ 
where $N= N_1 +N_2$ and $p =p_1 + p_2.$ 
\end{pro}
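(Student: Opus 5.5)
The plan is to work inside $({\mathbb C}^2)^{\otimes N_1}\otimes({\mathbb C}^2)^{\otimes N_2}=({\mathbb C}^2)^{\otimes N}$. By coassociativity (Remark \ref{remc}), $\pi^{\otimes N}\Delta^{(N)}=(\pi^{\otimes N_1}\Delta^{(N_1)}\otimes\pi^{\otimes N_2}\Delta^{(N_2)})\Delta$. Hence the action of $\mathfrak{gl}_{1,1}$ on $V_{N_1,p_1}\otimes V_{N_2,p_2}$ is given by the coproduct of Proposition \ref{hopf}:
\[
\Delta(e)=e\otimes 1+h\otimes e,\qquad \Delta(f)=f\otimes 1+h\otimes f,\qquad \Delta(h)=h\otimes h,\qquad \Delta(\epsilon_j)=\epsilon_j\otimes h_j+h_j\otimes\epsilon_j .
\]
Here $h_1=1$ and $h_2=h$. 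The four product vectors $a_{l_1l_2}:=\varpi^{(N_1,p_1)}_{l_1}\otimes\varpi^{(N_2,p_2)}_{l_2}$, with $l_i\in\{p_i,p_i+1\}$, are weight vectors for ${\cal E}_1$, ${\cal E}_2$ and $\mathfrak h$. Using Proposition \ref{fund1}(1) for each factor, I will compute these eigenvalues explicitly. They depend only on the totals: ${\cal E}_1$ gives $N-l$, $\mathfrak h$ gives $(-1)^l$, and ${\cal E}_2$ gives $l(-1)^{l-1}$ with $l=l_1+l_2$. For ${\cal E}_2$ this uses $l_1(-1)^{l_1-1}(-1)^{l_2}+(-1)^{l_1}l_2(-1)^{l_2-1}=\pm(l_1+l_2)$, and I need to check the sign carefully. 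So the weight spaces are: $l=p$ (spanned by $a_{p_1p_2}$), $l=p+1$ (spanned by $a_{p_1+1,p_2}$ and $a_{p_1,p_2+1}$), and $l=p+2$ (spanned by $a_{p_1+1,p_2+1}$).

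First I identify the highest-weight vector of $V_{N,p}$. The vector $v_p:=a_{p_1p_2}$ is annihilated by $\mathfrak f$, because $\Delta(f)$ contains an $f$ in one factor and each $\varpi^{(N_i,p_i)}_{p_i}$ is killed by $\mathfrak f$. It is also a linear combination of permutations of $e_1^{\otimes(N-p)}\otimes e_2^{\otimes p}$. So it qualifies as $\varpi^{(N,p)}_p$ in the sense of Remark \ref{not2} and Proposition \ref{fund1}. Setting $\varpi^{(N,p)}_{p+1}:=\mathfrak e v_p=a_{p_1+1,p_2}+(-1)^{p_1}a_{p_1,p_2+1}$ gives a copy of $V_{N,p}$, by Proposition \ref{fund1}.

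Next, in the two-dimensional weight space $l=p+1$, I look for the vector $v_{p+1}=\alpha\,a_{p_1+1,p_2}+\beta\,a_{p_1,p_2+1}$ killed by $\mathfrak f$. Applying $\Delta(f)$ and the formula $\mathfrak f\varpi^{(N_i,p_i)}_{p_i+1}=(-1)^{p_i}N_i\varpi^{(N_i,p_i)}_{p_i}$ gives a single linear condition of the form
\[
(-1)^{p_1}N_1\alpha+(-1)^{p_1}(-1)^{p_2}N_2\beta=0 .
\]
This has a one-dimensional solution space. Its solution $v_{p+1}$ is a combination of permutations of $e_1^{\otimes(N-p-1)}\otimes e_2^{\otimes(p+1)}$, so $\{v_{p+1},\mathfrak e v_{p+1}\}$ spans a copy of $V_{N,p+1}$. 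Here $\mathfrak e v_{p+1}$ is a multiple of $a_{p_1+1,p_2+1}$; the $e^2=0$ terms drop out.

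The main obstacle is to show that these two submodules really exhaust the four-dimensional space and meet trivially. This requires two non-degeneracy checks. First, $\mathfrak e v_{p+1}\neq0$, which amounts to a coefficient of the form $(-1)^{p_1}\alpha\pm\beta\ne0$ when $N_1,N_2\ge1$. Second, $\mathfrak e v_p$ and $v_{p+1}$ are linearly independent. For the second check I will use the orthogonality $\varpi^{(N,p)}_{p+1}\perp\varpi^{(N,p+1)}_{p+1}$ from Proposition \ref{fund1}(2), or equivalently compute the $2\times2$ determinant directly. Once both are settled, weight counting gives $4=2+2$, and the direct sum decomposition $V_{N_1,p_1}\otimes V_{N_2,p_2}=V_{N,p}\oplus V_{N,p+1}$ follows. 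Throughout, I will need to track the signs coming from $h$ in the coproduct carefully, since they decide whether the relevant coefficients vanish.
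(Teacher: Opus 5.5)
Your overall strategy matches the paper's. The product of the two highest-weight vectors generates $V_{N,p}$, and the kernel of $\mathfrak f$ in the two-dimensional weight space $l=p+1$ generates $V_{N,p+1}$. The gap is in the coproduct of $f$, which is exactly where the sign you flagged is decided.

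Since $f=L^{(1)}_{2,1}$, Proposition \ref{hopf} gives $\Delta(f)=f\otimes h_2+h_1\otimes f=f\otimes h+1\otimes f$. Your formula $f\otimes 1+h\otimes f$ is the coproduct of $\hat f=hf$ from Remark \ref{repr}(3), not of $f$. Concretely, $\mathfrak f$ turns an $e_2$ into $e_1$ with sign $(-1)^s$, where $s$ is the number of $e_2$'s to its right. So on the split $N=N_1+N_2$ it acts as $\mathfrak f\otimes\mathfrak h+1\otimes\mathfrak f$. This gives $\mathfrak f a_{p_1+1,p_2}=(-1)^{p_1+p_2}N_1a_{p_1p_2}$ and $\mathfrak f a_{p_1,p_2+1}=(-1)^{p_2}N_2a_{p_1p_2}$. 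The correct condition is therefore $(-1)^{p_1}N_1\alpha+N_2\beta=0$. This yields exactly the paper's vector $\varpi^{(N,p+1)}_{p+1}=a_{p_1+1,p_2}-(-1)^{p_1}\frac{N_1}{N_2}a_{p_1,p_2+1}$.

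Your condition instead gives $\beta=-(-1)^{p_2}\frac{N_1}{N_2}\alpha$, which is wrong whenever $p_1+p_2$ is odd. In that case the resulting vector is not annihilated by $\mathfrak f$. If moreover $N_1=N_2$ (e.g.\ $N_1=N_2=2$, $p_1=1$, $p_2=0$), your vector coincides with $\mathfrak e v_p=a_{p_1+1,p_2}+(-1)^{p_1}a_{p_1,p_2+1}$. It is then killed by $\mathfrak e$, and both of your non-degeneracy checks fail.

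With the corrected coproduct the argument closes. First, $\mathfrak e v_{p+1}=-(-1)^{p_1}\frac{N}{N_2}\alpha\,a_{p_1+1,p_2+1}\neq0$. Second, the $2\times 2$ determinant of $v_{p+1}$ (with $\alpha=1$) against $\mathfrak e v_p$ is $\pm\frac{N}{N_2}\neq0$. Alternatively, you can use the orthogonality of Proposition \ref{fund1}(2), which is now legitimate because $\mathfrak f v_{p+1}=0$. Your $\Delta(e)$, $\Delta(h)$, $\Delta(\epsilon_j)$ and the weight computations are correct. Your explicit non-degeneracy checks also make precise what the paper's proof leaves implicit.
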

\begin{proof}

Let,
\begin{enumerate}[{(a)}]
\item $\varpi^{(N,p)}_p: = \varpi^{(N_1,p_1)}_{p_1} \otimes \varpi^{(N_2,p_2)}_{p_2}$ and
$\varpi^{(N,p)}_{p+1}: = {\mathfrak e} \varpi^{(N,p)}_p.$

\item 
$ \varpi^{(N,p+1)}_{p+1}:  =  \varpi^{(N_1,p_1)}_{p_1+1} \otimes  \varpi^{(N_2,p_2)}_{p_2}  - \frac{N_{1}}{N_{2}} (-1)^{p_1}  \varpi^{(N_1,p_1)}_{p_1} \otimes \varpi^{(N_2,p_2)}_{p_2+1}$ and
$ \varpi^{(N,p+1)}_{p+2}:  = \mathfrak{e}  \varpi^{(N,p+1)}_{p+1}.$
\end{enumerate}

We then show that 
$ ~\mathfrak{f}\varpi^{(N,q)}_q =0, ~~q\in \big \{  p,\ p+1 \big \}.$\\
Also, from Proposition \ref{fund1} (recall also Remark \ref{nota1}, part (1)) it follows that relations of part (1) of Proposition \ref{fund1} also hold for $\varpi^{(N,p)}_{p},\ \varpi^{(N,p)}_{p+1}$ and $\varpi^{(N,p+1)}_{p+1},\ \varpi^{(N,p+1)}_{p+2}$ defined in (a) and (b) above.

That is, $V_{N_1,p_1} \otimes V_{N_2, p_2} = V_{N,p} \oplus V_{N, p+1},$ where each $V_{N,q},$ $q\in \big\{p,p+1  \big \}$ is a 
two-dimensional vector space with a basis $\big \{ \varpi^{(N,q)}_q,\ \varpi^{(N,q)}_{q+1}\}.$
\end{proof}

\subsection*{Irreducible representations as Young tableaux}
We recall basic definitions of Young tableaux as these are essential combinatorial objects that play a central role in representation theory  \cite{FultonHarris}.
We denote $\lambda \vdash N$ a partition $\lambda= (\lambda_1, \lambda_2, \ldots,\lambda_p)$ of the positive integer $N$,
where $\lambda_i$ are weakly decreasing positive integers and $\underset{i\in[p]}{\sum} \lambda_i = N.$ The size of $\lambda$ is denoted $|\lambda|,$ and in general $|\lambda| = N.$ 

\begin{defn} ({\bf Young diagram}) \label{Young} Suppose $\lambda= (\lambda_1, \lambda_2, \ldots,\lambda_p),$ $\lambda \vdash N$ where $p\geq 1$. The Young (or Ferrers) diagram of
shape $\lambda$ is an array of $N$ squares having $p$ rows with row $i$ containing $\lambda_i$
squares.
\end{defn}
\begin{defn}
    A filling (or weight) of a Young diagram is any way of putting a positive integer
in each box of the diagram. Let $\mu = (\mu_1, \mu_2, . . . , \mu_l)$ be a filling of a Young diagram.
Each $\mu_i$ is the number of times the integer $i$ appears in the diagram.
\end{defn}
In order for the diagram to be completely filled, it is necessary for $|\lambda| = |\mu|$. 
It is possible to fill diagrams arbitrarily in this manner, however 
we impose certain restrictions on the filling $\mu$. These restrictions lead to
the definition of a Young tableau.
\begin{defn} \label{Young2} ({\bf Standard Young tableaux})
Suppose $\lambda \vdash N.$ A Young tableau $T$ is obtained by filling in the boxes of the Young 
diagram with symbols taken from some alphabet, which is usually required to be a totally ordered set. 
A Young tableau of shape $\lambda$ is  called a $\lambda$-tableau. A Young tableau is standard 
if the rows and columns of $T$ are
increasing sequences. That is, $T$ is filled with the numbers $1, 2, \ldots,N$ bijectively. 
\end{defn}
We consider the set $[n]$ with the standard ordering $1<2< \ldots k<k+1< \ldots <n.$
There are various definitions for semi-standard Young tableaux depending on the variation 
of the associated Schur functions \cite{Mac92}.
We use here the following definition of a semi-standard Young tableau.
\begin{defn} \label{ssyt}
A Young tableau is semi-standard if the filling is:
\begin{enumerate}
    \item 
weakly increasing across each row and strictly increasing down each column for numbers $\big \{1,2, \ldots, k \big\}$ 
\item strictly increasing across each row and weakly increasing down each column for numbers $\big \{k+1,k+2, \ldots, n\big \}.$ 
\end{enumerate}
\end{defn}
Henceforth, we use the shorthand notation $SSYT$ and $SYT$ for semi-standard and standard Young tableau respectively.
The $SSYT$ defined above are associated to the hook Schur functions also known as super-symmetric Schur functions and correspond to representations of the Lie superalgebra $\mathfrak{gl}(k|m)$, see \cite{Kac}. The hook Schur functions were introduced in \cite{BR83} and correspond to the sixth variant of Schur functions considered in \cite{Mac92}. 

We focus now on the algebra $\mathfrak{gl}_{1,1}$. In this case the Young tableaux are filled by 1 and 2, and according to Definition \ref{ssyt} the only allowed $SSYTs$ are of shape $\lambda = (N-p, \underbrace{1, 1, \ldots, 1}_{p})$, with two possible fillings: $\mu_1= (N-p,\ p)$ and $\mu_2= (N-p-1,\ p+1).$  The two-element set of $SSYTs$ of shape $\lambda = (N-p, \underbrace{1,1, \ldots,1}_p)$ is denoted $SSYT(N, p).$

Each one of the two elements in $SSYT(N,p)$ corresponds to an element of the basis ${\mathrm B}_{N,p}=\big \{ \varpi_p^{(N,p)},\ \varpi^{(N,p)}_{p+1}\big \} $ of the two-dimensional vector space $V_{N,p}$ (see Proposition \ref{fund1}). That is, there is a bijective map between the sets $\big \{{\mathrm B}_{N,p}\big \}$ and $\big \{SSYT(N,p)\big \}$, $p\in \big \{0,1, \ldots, N-1\big \},$ such that ${\mathrm B}_{N,p} \mapsto SSYT(N,p),$ specifically\\ 
\begin{center}
$\varpi^{(N,p)}_{p} \mapsto$  {\tiny $\underbrace{\begin{ytableau}
\ & & & \ldots & & \\
\  \bullet  \\ 
\ \vdots\\
\ \bullet \\
\ \bullet
\end{ytableau}}_{(N-p)-boxes},~~~$} $ ~~~~\varpi^{(N,p)}_{p+1} \mapsto$ {\tiny $\underbrace{\begin{ytableau}
\ & & & \ldots & & \bullet \\
\ \bullet  \\ 
\ \vdots\\
\ \bullet \\
\ \bullet
\end{ytableau}}_{(N-p)-boxes}.$} 
\end{center}
The Young tableaux above correspond to the fillings $\mu_1= (N-p,\ p)$ and $\mu_2= (N-p-1,\ p+1)$ respectively. The empty boxes in the tableaux are occupied by 1, whereas the boxes containing a bullet are occupied by 2.

%

\end{document}